\newcommand{\R}{{\mathbb R}}
\newcommand{\E}{{\mathbb E}}
\newcommand{\PP}{{\mathbb P}}
\newcommand{\X}{{\mathcal X}}
\newcommand{\N}{{\mathbb N}}
\newcommand{\F}{{\mathcal F}}
\newcommand{\GNZ}{{Georgii-Nguyen-Zessin }}
\newcommand{\1}[1]{\mathbf{1}_{ {#1} }}
\newcommand{\dom}{{\rm Dom}}
\renewcommand{\d}{{\text{d}}}
\newcommand{\moins}{\,\backslash \, }
\theoremstyle{remark}
\newtheorem*{rem}{Remark}
\theoremstyle{definition}
\theoremstyle{plain}
\newtheorem{thm}{Theorem}[section]
\newtheorem{lem}{Lemma}[section]
\newtheorem{cor}{Corollary}[section]
\newtheorem{prop}{Proposition}[section]
\theoremstyle{definition}
\newtheorem*{ex}{Example}
\newtheorem{hyp}{Hypothesis}
\newtheorem{defin}{Definition}
\title{Moment formulae for general point processes}
\author{L. Decreusefond and I. Flint}
\address{Institut Telecom, Telecom Paristech, CNRS LTCI, Paris, France}
\email{\{laurent.decreusefond,ian.flint\} @telecom-paristech.fr}
\keywords{Point processes, Moments, Stochastic integral, Measure transformation, Malliavin calculus}
\date{}
\begin{document}

\begin{abstract}
	The goal of this paper is to generalize most of the moment formulae obtained in \cite{Pa}. More precisely, we consider a general point process $\mu$, and show that the relevant quantities to our problem are the so-called Papangelou intensities. Then, we show some general formulae to recover the moment of order $n$ of the stochastic integral of a random process. We will use these extended results to study a random transformation of the point process.
\end{abstract}

\maketitle

\section{Introduction}

Point processes constitute a general framework used to model a wide variety of phenomena. The underlying theory is well understood, and the relevant literature is abundant (see \cite{DVa} and \cite{Ka} for example). However, we have a much deeper understanding of the Poisson point process, which is one of the reasons for its use in a lot of practical cases. In particular, one has a chaos-expansion of Poisson functionals, concentration inequalities, moment formulae, etc. (\cite{Pa,Pb}) On the other hand, one lacks most of these tools for more general point processes. Our goal is to obtain moment formulae for very general point processes that are only required to have a Papangelou intensity. Intuitively, the Papangerlou intensity $c$ is such that $c(x,\xi) \, \lambda(\d x)$ is the conditional probability of finding a particle in the vicinity of $x$, given the configuration $\xi$ (here $\lambda$ is the reference measure). Historically, the first type of processes satisfying this condition is the Gibbs process. For a Gibbs process, $c(x,\xi) = e^{H(x\cup \xi) - H(\xi)}$, where $H$ is a global energy function, chosen in a suitable class of functions. The interested reader can find further information regarding Gibbs processes in \cite{Ra, Rb} as well as in \cite{XZa}. More recently, determinantal processes have been found to have Papangelou intensities under certain conditions (see \cite{GYa} as well as \cite{Yb}).

As we have stated previously, our aim is to obtain moment formulae for quite general point processes. Hence, we will follow the path of \cite{Pc}, in which some moment formulae were obtained for the Poisson point process (PPP). In particular, the main result of \cite{Pc} is the following formula, obtained in the case of the PPP:
\begin{align*}
		\E[  \big( \int u_x &(\xi) \,\xi(\d x) \big)^n ] \\
		&= \! \! \! \! \sum_{\{P_1,\dots,P_k\} \in \mathcal{T}_n} \! \!  \E \Big[ \int_{E^k} u_{x_1}^{|P_1|} \dots  u_{x_k}^{|P_k|} &(\xi \cup x_1 \cup \dots \cup x_k) \,\lambda(\d x_1)\dots \lambda(\d x_k)  \Big],
\end{align*}
where $\mathcal{T}_n$ is the set of all partitions of $\{1,\dots, n \}$, $|P_i|$ is the cardinality of $P_i$, $i=1,\dots,k$, and u : $E \times \X \rightarrow \R$ is a nonnegative measurable process.

The proofs in \cite{Pc} are mostly based on the use of previous results related to Malliavin calculus (in particular the formula that gives $\E [\delta(u)^n]$). In this paper, we generalize all the formulae in \cite{Pc} to the case of a point processes which has Papangelou intensities (which obviously includes the case of the PPP). Our proofs are mainly based on the \GNZ formula, and as a consequence, we also obtain analogues of the formula that gives $\E [\delta(u)^n]$. In all cases, the difference between the PPP and a general point process is a randomization of the underlying measure $\lambda$ obtained by multiplying it by $c(\cdot,\xi)$.

Our results also allow us to study random transformations of point processes. In the case of a general point process $\mu$, let us introduce a random transformation $\tau$, such that each particle $x$ of the configuration $\xi$ is moved to $\tau(x,\xi)$. Then, we obtain an explicit characterization of $\tau \mu$ if we only assume that $\tau$ is invertible and exvisible (the term will be defined precisely in section 5). An application is to show that the non-random transformation of a dererminantal measure yields another determinantal measure.

The remainder of the article is organized as follows. In section $2$, we introduce the basic tools that are used to study point processes. In section $3$, we give the main result of the paper, as well as most interesting consequences. In part $4$, we show analogue formulae for $\E [\delta(u)^n]$, where $\delta$ is the divergence, which will be rigorously defined in the case of a general point process. To conclude, section $5$ will deal with the study of a random transformation of the measure $\mu$.

\section{Notations and general results}

\subsection{Point processes}

Let $E$ be a Polish space, $\mathcal{O}(E)$ the family of all non-empty open subsets of $E$ and $\mathcal{B}$ denotes the corresponding Borel $\sigma$-algebra.  $\lambda$ is a Radon measure on $(E,\mathcal{B})$. Let $\X$ be the space of locally finite subsets in $E$, also called the configuration space:
\begin{equation*}
	\X = \{ \xi \subset E \,: \, | \Lambda \cap \xi | < \infty \, \text{ for any compact } \Lambda \subset E    \}.
\end{equation*}
In fact, $\X$ consists of all positive integer-valued Radon measures such that for all $x \in E$, $\xi({x}) \le 1$. Hence, it is naturally topologized by the vague topology, which is the weakest topology such that for all continuous and compactly supported functions $f$ on $E$, the mapping
\begin{equation*}
	\xi \mapsto \langle f, \xi \rangle := \sum_{x \in \xi} f(x)
\end{equation*}
is continuous. We denote by $\F$ the corresponding $\sigma$-algebra. We will call elements of $\X$ configurations and identify a locally finite configuration $\xi$ with the atomic Radon measure $\sum_{x \in \xi} \varepsilon_x$, where we have written $\varepsilon_x$ for the Dirac measure at $x \in E$. For a given $\xi = \sum_{x \in \xi} \varepsilon_x$, we will usually view $\xi$ as a set, and write $\xi \cup x_0 = \xi \cup \{ x_0 \} $ for the addition of a particle at $x_0$ and $\xi \moins x_0 = \xi \moins \{ x_0 \}$ for the removal of a particle at $x_0$. Moreover, for a measurable nonnegative $u : E \times \X \rightarrow \R$, we will often use the notation  $\int u_x(\xi) \,\xi(\d x) := \sum_{x \in \xi} u_x(\xi)$.

Next, let $\X_0 = \{ \xi \in \X \, : \, | \xi | < \infty \}$ the space of all finite configurations on $E$. $\X_0$ is naturally equipped with the trace $\sigma$-algebra $\F_0 = \F |_{\X_0}$. 

A random point process is defined as a probability measure on $(\X, \F)$. A random point process $\mu$ is characterized by its Laplace transform, which is defined for any measurable non-negative function $f$ on $E$ as
\begin{equation*}
	{\rm L}(f) = \int_\X e^{- \sum_{x \in \xi} f(x)} \,\mu(\d \xi)	.
\end{equation*} 
Now, let us introduce a number of measures useful in the study of point processes. Our notations are inspired by that of \cite{GYa}, where the reader can also find a brief summary of many properties of Papangelou intensities.
\begin{defin}
	We define the $\lambda$-sample measure $L$ on $(\X_0, \F_0)$ by the identity
	\begin{equation*}
	\int f(\alpha) \, L(\d \alpha) = \sum_{n \ge 0} \frac{1}{n!} \int_{E^n} f(\{ x_1, \dots, x_n \}) \,\lambda(\d x_1) \dots  \lambda(\d x_n),
	\end{equation*}
	for any measurable nonnegative function $f$ on $\X_0$.
\end{defin}
Point processes are often characterized via their correlation function, defined as below.
\begin{defin}[Correlation function]
	A point process $\mu$ is said to have a correlation function $\rho : \X_0 \rightarrow \R$ if $\rho$ is measurable and 
	\begin{equation*}
		\int_\X \sum_{\alpha \subset \xi, \ \alpha \in \X_0} f ( \alpha) \,\mu(\d \xi) = \int_{\X_0} f ( \alpha )\, \rho(\alpha) \, L(\d \alpha) ,
	\end{equation*}
	for all measurable nonnegative functions $f$ on $\X_0$. For $\xi = \{ x_1, \dots, x_n \}$, we will sometimes write $\rho(\xi) = \rho_n(x_1,\dots,x_n)$ and call $\rho_n$ the $n$-th correlation function, where here $\rho_n$ is a symmetrical function on $E^n$. 
\end{defin}
It can be noted that correlation functions can also be defined by the following property, both characterizations being equivalent in the case of simple point processes.
\begin{prop}
	A point process $\mu$ is said to have correlation functions $(\rho_n)_{n \in \N}$ if for any $A_1,\dots,A_n$ disjoint bounded Borel subsets of $E$,
	\begin{equation*}
		\E[\prod_{i=1}^n \xi(A_i)] = \int_{A_1\times \dots \times A_n} \rho_n(x_1,\dots,x_n) \,\lambda( \d x_1)\dots \lambda(\d x_n)	.
	\end{equation*}
\end{prop}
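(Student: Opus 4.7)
My plan is to establish both implications of the equivalence between the two characterizations, making essential use of the simplicity of $\mu$ (which is built into the present framework since $\X$ consists of configurations with $\xi(\{x\}) \le 1$).

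Forward direction (Definition~2.2 $\Rightarrow$ the proposition): I would fix disjoint bounded Borel sets $A_1,\dots,A_n$ and expand
\[
\prod_{i=1}^n \xi(A_i) = \sum_{(x_1,\dots,x_n) \in \xi^n} \prod_{i=1}^n \mathbf{1}_{A_i}(x_i).
\]
By pairwise disjointness of the $A_i$, any non-vanishing term involves $n$ distinct points. Grouping such ordered tuples by their underlying unordered $n$-subset $\alpha \subset \xi$ yields $\prod_i \xi(A_i) = \sum_{\alpha \subset \xi,\, |\alpha|=n} h(\alpha)$, where $h(\alpha) := \sum_\phi \prod_{i=1}^n \mathbf{1}_{A_i}(\phi(i))$, summed over bijections $\phi : \{1,\dots,n\} \to \alpha$. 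Applying Definition~2.2 to $f = h$ and invoking the explicit form of the sample measure $L$, only the $k=n$ term in the expansion survives; the symmetry of $\rho_n$ together with the disjointness of the $A_i$ collapses the $n!$ permutations into $n!$ copies of the same integral over $A_1 \times \dots \times A_n$, which cancels the factor $1/n!$ and yields the desired formula.

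Converse direction: I would use a monotone-class / uniqueness-of-measures argument. The proposition's identity pins down the factorial moment measure
\[
M^{(n)}(C) := \E\Big[\sum^{\neq}_{(x_1,\dots,x_n) \in \xi^n} \mathbf{1}_C(x_1,\dots,x_n)\Big]
\]
as $\rho_n\,\lambda^{\otimes n}$ on the $\pi$-system of rectangles $A_1 \times \dots \times A_n$ with pairwise disjoint bounded Borel factors: indeed, by simplicity and disjointness, $\prod_i \xi(A_i)$ equals $M^{(n)}$ evaluated on such rectangles. Since this $\pi$-system generates the Borel $\sigma$-algebra of $E^n$ off the diagonal, and since simplicity ensures that $M^{(n)}$ charges no diagonal, the identity $M^{(n)} = \rho_n\,\lambda^{\otimes n}$ extends to all symmetric nonnegative Borel functions on $E^n$. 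Finally, the left-hand side of Definition~2.2 decomposes according to the cardinality of $\alpha$ as $\sum_{n \ge 0} (n!)^{-1} \int_{E^n} f(\{x_1,\dots,x_n\})\, d M^{(n)}(x_1,\dots,x_n)$, and substituting the identified form of $M^{(n)}$ recovers Definition~2.2.

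\textbf{Main obstacle.} The delicate step is the converse direction: translating the product-of-disjoint-sets characterization into an identity of measures on $\X_0$ requires carefully generating the symmetric Borel $\sigma$-algebra of $E^n$ from disjoint rectangles, which in turn relies on simplicity to discard the diagonal. The forward direction is essentially bookkeeping with the symmetrization factor $n!$.
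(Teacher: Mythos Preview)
The paper does not prove this proposition. It is stated as an alternative characterization of correlation functions, accompanied only by the remark that ``both characterizations being equivalent in the case of simple point processes,'' and no argument is supplied. In fact, the paper later \emph{uses} this equivalence as a black box: in the converse direction of the proof of Proposition~\ref{charact}, after specializing $v_k = \mathbf{1}_{A_k}$ with disjoint $A_k$ and arriving at
\[
\E\Big[\prod_{k=1}^n \xi(A_k)\Big] = \int_{A_1\times\cdots\times A_n} \rho_n(x_1,\dots,x_n)\,\lambda(\d x_1)\cdots\lambda(\d x_n),
\]
the paper simply concludes ``which means that $\mu$ has correlation functions $(\rho_k)_{k\in\N}$ since $\mu$ is a simple point process,'' i.e.\ it invokes precisely the statement you have set out to prove.

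Your proof proposal is therefore not comparable to any argument in the paper, but it is correct. The forward direction is indeed straightforward bookkeeping: disjointness forces the $n$ points to be distinct, and the $n!$ from symmetrizing the enumeration of $\alpha$ cancels the $1/n!$ in the sample measure $L$. Your converse direction is also sound: identifying the factorial moment measure $M^{(n)}$ on disjoint rectangles, extending by a $\pi$-$\lambda$/monotone-class argument to all of $\mathcal{B}(E^n)$ off the diagonal, and invoking simplicity to discard the diagonal is the standard route (see e.g.\ Daley--Vere-Jones). The one point worth making fully explicit in a write-up is that the collection of disjoint bounded rectangles does generate the Borel $\sigma$-algebra of $E^n$ restricted to the complement of the diagonal set $\{x_i = x_j \text{ for some } i\neq j\}$, and that both $M^{(n)}$ and $\rho_n\,\lambda^{\otimes n}$ are $\sigma$-finite there; once that is said, your argument is complete.
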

Recall that $\rho_1$ is the mean density of particles with respect to $\lambda$, and 
\begin{equation*}
	\rho_n(x_1,\dots,x_n) \, \lambda(\d x_1)\dots \lambda(\d x_n)
\end{equation*}
is the probability of finding a particle in the vicinity of each $x_i$, $i=1,\dots,n$.

Let us now define the so-called Campbell measures:
\begin{defin}[Campbell measures]
	The reduced Campbell measure of a point process $\mu$ is the measure $C_\mu$ on the product space $(E \times \X, \mathcal{B} \otimes \F)$ defined by 
	\begin{equation*}
		C_\mu(A \times B) = \int \sum_{x \in \xi} \,\1A (x) \1B(\xi \moins x) \, \mu(\d \xi),
	\end{equation*}
	where $A \in \mathcal{B}$ and $B \in \F$.
	We define similarly the reduced compound Campbell measure of a point process $\mu$ is the measure $\hat{C}_\mu$ on the product space $(\X_0 \times \X, \F_0 \otimes \F)$ defined by 
	\begin{equation*}
		\hat{C}_\mu(A \times B) = \int \sum_{\alpha \subset \xi, \ \alpha \in \X_0} \1A (\alpha) \1B(\xi \moins \alpha) \, \mu(\d \xi),
	\end{equation*}
	where $A \in \F_0$ and $B \in \F$.
\end{defin}
The results in this paper can be obtained under the assumption $C_\mu \ll \mu$, but we will work under a slightly stronger condition in order to find more compact results. We will thus assume throughout this paper that the following condition is fulfilled:
\begin{hyp}[Condition $(\Sigma_\lambda)$]
\label{papangelou}
	The point process $\mu$ is assumed to satisfy condition $(\Sigma_\lambda)$, i.e. $C_\mu \ll \lambda \otimes \mu$.
\end{hyp}
Henceforth, any Radon-Nikodym density $c$ of $C_\mu$ relative to $\lambda \otimes \mu$ is called a version of the Papangelou intensity of $\mu$. 

The preceding assumption also implies that $\hat{C}_\mu \ll L \otimes \mu$ and we will thus similarly  denote any Radon-Nikodym density of $\hat{C}_\mu$ relative to $L \otimes \mu$ by $\hat{c}$. One then has for any $\xi \in \X$, $\hat{c}(\emptyset,\xi) = 1$, as well as for all $x\in E$, $\hat{c}(x,\xi) = c(x,\xi)$. Moreover, the following commutation relationship is also verified:
\begin{equation}
\label{roulement}
	\forall \xi, \eta, \nu \in \X, \quad \hat{c}(\nu,\eta \cup \xi)\, \hat{c}(\eta, \xi) = \hat{c}(\nu \cup \eta, \xi).
\end{equation}

Hypothesis \ref{papangelou}, along with the definition of the reduced Campbell measure, allows us to write the following important identity, known as the \GNZ identity:
\begin{equation}
\label{mecke}
	\int_\X \sum_{x \in \xi} u (x, \xi \moins x) \,\mu(\d \xi) = \int_\X \int_E u (x, \xi )\, c(x,\xi) \,\lambda(\d x) \mu(\d \xi) ,
\end{equation}
for all measurable nonnegative functions $u : E \times \X \rightarrow \R$. We also have a similar identity for all measurable nonnegative functions $u : \X_0 \times \X \rightarrow \R$:
\begin{equation}
\label{mecke2}
	\int_\X \sum_{\alpha \subset \xi, \ \alpha \in \X_0} u (\alpha, \xi \moins \alpha) \,\mu(\d \xi) = \int_\X \int_E u (\alpha, \xi )\, \hat{c}(\alpha,\xi) \,\lambda(\d x) \mu(\d \xi) ,
\end{equation}

Combining relation (\ref{mecke}) and the definition of the correlation functions, we find
\begin{equation*}
	\E[ c(x,\xi) ] = \rho_1(x),
\end{equation*}
for almost every $x \in E$. We also find more generally, using (\ref{mecke2}), that
\begin{equation}
\label{evc}
	\E[ \hat{c}(\alpha,\xi) ] = \rho(\alpha),
\end{equation}
for almost every $\alpha \in \X_0$.

Let us continue by giving some examples of point processes satisfying assumption \ref{papangelou}.
\begin{ex}[Poisson process]
	Let us start by considering the well-known Poisson process with intensity $z$ with respect to $\lambda$, noted $\pi^{z \d \lambda} $, defined for example via its Laplace transform as follows:
	\begin{equation*}
		{\rm L}_{\pi^{z \d \lambda}}(f) = e^{- \int (1-e^{-f(x)}) \, z(x) \lambda(\d x) }	.
	\end{equation*} 
	In that case, $(\Sigma_\lambda)$ is verified and we have $c(x,\xi) = z(x)$, for any $x \in E$ and $\xi \in \X$, and the \GNZ identity reads
	\begin{equation*}
	\int_\X \sum_{x \in \xi} u_x ( \xi \moins x) \,\pi^{z \d \lambda}(\d \xi) = \int_\X \int_E u_x ( \xi )\, z(x) \,\lambda(\d x) \pi^{z \d \lambda}(\d \xi) ,
\end{equation*}
	which is a well known result in the Poisson case (known as the Campbell-Mecke identity). Recall that the Poisson measure has a correlation function defined by
	\begin{equation*}
		\rho_{\pi^{z \d \lambda}} (\xi) = \prod_{x \in \xi} z(x), \quad \xi \in \X.
	\end{equation*}
	One can notice that $\rho_1(x) = z(x)$, $x \in E$, which is also equal to $c(x,\xi)$. Indeed, recall that $c(x,\xi) \, \lambda(\d x)$ is interpreted as the probability of finding a particle in the differential region $\d x$, conditionally on $\xi$. For the Poisson process, the conditioning by $\xi$ does not add any information and thus the probability is also equal to $\rho_1(x) \, \lambda(\d x)$.
\end{ex}

\section{Moment formulae}

Let us start by proving a simple combinatorial lemma.
\begin{lem}
\label{lem1}
	Let $F$ be a function from the power set $\mathcal{P}(\N)$ to $\R$. Then for $n \in \N^*$,
	\begin{multline*}
		\sum_{k=1}^{n+1} \sum_{\mathcal{P} \in \mathcal{T}_{n+1}^k } F(\mathcal{P}) = \sum_{k=1}^{n}\sum_{\mathcal{P} =\{P_1,\dots,P_k \} \in \mathcal{T}_{n}^k }   F \big(\mathcal{P} \cup \big\{ \{ n+1 \} \big\} \big) \\
		\shoveright{+ \sum_{k=1}^{n}\sum_{\mathcal{P} =\{P_1,\dots,P_k \} \in \mathcal{T}_{n}^k }  \sum_{l=1}^k F \big(P_1,\dots,P_{l-1},P_l \cup \{n+1 \},P_{l+1},\dots,P_k \big).}\\
	\end{multline*}
\end{lem}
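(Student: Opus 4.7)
The plan is to prove the identity by exhibiting a bijection between partitions of $\{1,\dots,n+1\}$ and the disjoint union of the two families indexed on the right-hand side. Every partition $\mathcal{Q} \in \mathcal{T}_{n+1}$ falls into exactly one of two types, depending on the role of the element $n+1$.

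First, I would split $\mathcal{T}_{n+1}$ into two disjoint subsets: $\mathcal{T}_{n+1}^{(\text{iso})}$, those partitions for which $\{n+1\}$ is itself a block; and $\mathcal{T}_{n+1}^{(\text{glued})}$, those for which $n+1$ sits in a block of size at least two. For $\mathcal{Q} \in \mathcal{T}_{n+1}^{(\text{iso})}$, removing the block $\{n+1\}$ yields a partition $\mathcal{P}$ of $\{1,\dots,n\}$ with one fewer block, and the map $\mathcal{Q} \mapsto \mathcal{P}$ is clearly a bijection $\mathcal{T}_{n+1}^{k+1,(\text{iso})} \to \mathcal{T}_n^k$ for each $k$; under this correspondence $F(\mathcal{Q}) = F(\mathcal{P} \cup \{\{n+1\}\})$, so this family contributes exactly the first sum on the right-hand side.

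Next, for $\mathcal{Q} \in \mathcal{T}_{n+1}^{(\text{glued})}$ with $k$ blocks, let $B$ denote the (unique) block of $\mathcal{Q}$ containing $n+1$. Then $\mathcal{P} := \{B \setminus \{n+1\}\} \cup (\mathcal{Q} \setminus \{B\})$ is a partition of $\{1,\dots,n\}$ with $k$ blocks, and together with the distinguished index $l \in \{1,\dots,k\}$ identifying which block of $\mathcal{P}$ equals $B \setminus \{n+1\}$, the data $(\mathcal{P},l)$ fully recovers $\mathcal{Q}$ (conversely, given any $(\mathcal{P},l)$ with $\mathcal{P} = \{P_1,\dots,P_k\} \in \mathcal{T}_n^k$, inserting $n+1$ into $P_l$ produces an element of $\mathcal{T}_{n+1}^{k,(\text{glued})}$). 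This is the bijection that matches the second sum on the right-hand side.

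The identity then follows by summing $F(\mathcal{Q})$ over $\mathcal{Q} \in \mathcal{T}_{n+1}$ partitioned according to these two cases and transferring the sum through the two bijections. The only mild obstacle is notational bookkeeping: verifying that each partition of $\{1,\dots,n+1\}$ is counted exactly once, in particular that in the second family no partition of $\{1,\dots,n\}$ with a distinguished block is lost when the added element $n+1$ happens to make $P_l \cup \{n+1\}$ coincide with a previously existing block — which cannot occur, since the blocks of a partition are by definition disjoint, so the map $(\mathcal{P},l) \mapsto \mathcal{Q}$ is injective.
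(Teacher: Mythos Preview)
Your proof is correct and is essentially the same argument as the paper's: both rest on the observation that removing $n+1$ from its block sends a partition of $\{1,\dots,n+1\}$ to a partition of $\{1,\dots,n\}$, with the two cases (singleton block vs.\ not) corresponding exactly to the two sums on the right. The paper packages this as a single surjection $\Xi:\mathcal{T}_{n+1}\to\mathcal{T}_n$ whose fibers are computed, while you split into two disjoint families first and biject each separately, but the content is identical.
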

\begin{proof}

	Let us consider the functions $(\Xi_k)_{1 \le k \le n+1}$ defined as follows.
	\begin{equation*}
		\left.  \begin{array}{l l l}
			\Xi_k : &\mathcal{T}_{n+1}^k &\longrightarrow \mathcal{T}_{n}^k \cup \mathcal{T}_n^{k-1}	,\\ 
			&\{ P_1,\dots,P_k \} &\longmapsto \{ P_1,\dots,P_{l-1},P_l \moins \{ n+1 \},P_{l+1},\dots,P_k \}	,
		\end{array}\right.
	\end{equation*}
	where $1 \le l \le k$ is such that $\{ n+1 \} \in P_l$. Moreover, let us define the function $\Xi$ as
	\begin{equation*}
		\left.  \begin{array}{l l l}
			\Xi : &\mathcal{T}_{n+1} &\longrightarrow \mathcal{T}_{n} ,\\ 
			&\mathcal{P} &\longmapsto \Xi_{| \mathcal{P} |}(\mathcal{P}),
		\end{array}\right.
	\end{equation*}
	where $\mathcal{T}_{n}$ is the set of all partitions of $\{ 1,\dots,n \}$. Then, $\Xi$ is surjective from $ \mathcal{T}_{n+1}$ into $\mathcal{T}_{n}$ and moreover, for $\mathcal{P} = \{ P_1,\dots,P_{| \mathcal{P} |}  \} \in \mathcal{T}_n$, we have that
	\begin{equation*}
		\Xi^{-1} (\mathcal{P} ) = \big\{ \mathcal{P} \cup \big\{ \{ n+1 \} \big\} \big\} \cup \bigcup_{l=1}^{| \mathcal{P} |}  \big\{ P_1,\dots,P_l \cup \{ n+1 \},\dots,P_{| \mathcal{P} |} \big\}.
	\end{equation*}
	Then, using the preceding observations, we obtain
	\begin{align*}
		\sum_{k=1}^{n+1} \sum_{\mathcal{P} \in \mathcal{T}_{n+1}^k } F(\mathcal{P}) &= \sum_{\mathcal{P} \in \mathcal{T}_{n+1} } F(\mathcal{P}) \\
		&= \! \! \! \! \! \! \! \! \! \! \! \!  \sum_{\substack{ \mathcal{P} \in \mathcal{T}_{n} \\ \mathcal{P} =  \{ P_1,\dots,P_{| \mathcal{P} |}  \}  } } \! \! \! \! \! \! \! \! \! \! \! \!  F(\mathcal{P} \cup \{ n+1 \}) + \sum_{l=1}^{| \mathcal{P} |} F( \big\{ P_1,\dots,P_l \cup \{ n+1 \},\dots,P_{| \mathcal{P} |} \big\}),
	\end{align*}
	which is the desired result once we sum on the different lengths possible for elements of $ \mathcal{T}_{n}$.
\end{proof}

We can now give the first important result of this paper, which will yield many cases of particular interest.
\begin{thm}
\label{prop1}
	For any $n \in \N$, any measurable nonnegative functions $u_k : E \times \X \rightarrow \R$, $k = 1,\dots,n$, and any bounded function $F$ on $\X$, we have
	\begin{align*}
		\E[ F \prod_{k=1}^n \int u_k(y,\xi) \,\xi(\d y) ] = \sum_{k=1}^n  \sum_{\mathcal{P} \in \mathcal{T}_n^k}  \E \Big[ \int_{E^k} & F(\xi \cup x) u^\mathcal{P}(x,\xi \cup x) \hat{c}(x,\xi) \,\lambda_k(\d x) \Big] ,
	\end{align*}
	where $\mathcal{T}_n^k$ is the set of all partitions of $\{1,\dots, n \}$ into $k$ subsets. Here, for $\mathcal{P} = \{ P_1,\dots,P_k \} \in \mathcal{T}_n^k$, we have used the compact notation $x = (x_1,\dots,x_k)$, as well as $\lambda_k(\d x) = \lambda(\d x_1)\dots \lambda(\d x_k) $ and 
	\begin{equation*}
		u^\mathcal{P}(x, \xi) = \prod_{l=1}^k \prod_{i \in P_l} u_i(x_l,\xi).
	\end{equation*}
\end{thm}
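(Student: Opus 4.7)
The plan is to proceed by induction on $n$, combining the \GNZ identity (\ref{mecke}) with the combinatorial decomposition of Lemma~\ref{lem1}.

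For the base case $n=1$, only the partition $\mathcal{T}_1^1=\{\{\{1\}\}\}$ contributes, so that $u^{\mathcal{P}}(x_1,\xi)=u_1(x_1,\xi)$ and $\hat{c}(\{x_1\},\xi)=c(x_1,\xi)$, and the statement reduces to
$$\E\Big[F\sum_{y\in\xi}u_1(y,\xi)\Big]=\E\Big[\int_E F(\xi\cup x_1)\,u_1(x_1,\xi\cup x_1)\,c(x_1,\xi)\,\lambda(\d x_1)\Big],$$
which is exactly (\ref{mecke}) applied to $u(y,\eta):=F(\eta\cup y)\,u_1(y,\eta\cup y)$, after observing that for $y\in\xi$ one has $(\xi\moins y)\cup y=\xi$.

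For the induction step, I would apply the rank-$n$ formula with $F$ replaced by $F'(\xi):=F(\xi)\int u_{n+1}(y,\xi)\,\xi(\d y)$. In the resulting sum the new integrand $F'(\xi\cup x)$ carries the factor $\sum_{y\in\xi\cup x}u_{n+1}(y,\xi\cup x)$, which splits as $\sum_{y\in\xi}u_{n+1}(y,\xi\cup x)+\sum_{l=1}^{k}u_{n+1}(x_l,\xi\cup x)$. The second piece, multiplied by $u^{\mathcal{P}}(x,\xi\cup x)$, is precisely $\sum_{l=1}^{k}u^{(P_1,\dots,P_l\cup\{n+1\},\dots,P_k)}(x,\xi\cup x)$, which after summing over $\mathcal{P}\in\mathcal{T}_n^k$ produces the second sum appearing on the right-hand side of Lemma~\ref{lem1}. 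The first piece is handled by a second application of (\ref{mecke}): writing the sum over $y\in\xi$ as $\sum_{y\in\xi}w(y,\xi\moins y)$ for the appropriate $w$, the \GNZ formula introduces an extra variable $x_{n+1}$ together with a factor $c(x_{n+1},\xi)\,\lambda(\d x_{n+1})$. The commutation relation (\ref{roulement}) then merges $\hat{c}(x,\xi\cup x_{n+1})\,c(x_{n+1},\xi)=\hat{c}(x\cup x_{n+1},\xi)$, and the resulting term is exactly the contribution of the partition $\mathcal{P}\cup\{\{n+1\}\}$ of $\{1,\dots,n+1\}$, matching the first sum in Lemma~\ref{lem1}. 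Adding both pieces and invoking Lemma~\ref{lem1} collapses them into a single sum over $\bigcup_{k=1}^{n+1}\mathcal{T}_{n+1}^{k}$, which is the formula at rank $n+1$.

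The main obstacle I anticipate is technical rather than conceptual: the auxiliary function $F'=F\,\int u_{n+1}(y,\xi)\,\xi(\d y)$ used at the inductive step is in general unbounded, whereas the theorem is stated for bounded $F$. I would first establish the identity for arbitrary nonnegative $F$, so that both sides take values in $[0,\infty]$ and one can truncate the $u_k$ and pass to the limit by monotone convergence, and only then recover the bounded case by linearity. Beyond this integrability issue, the remaining difficulty is purely bookkeeping: one must keep careful track of which configuration (among $\xi$, $\xi\cup x$ or $\xi\cup x\cup x_{n+1}$) is the argument of each factor $F$, $u_k$ and $\hat{c}$, and apply (\ref{roulement}) in the correct direction when reassembling the two partial sums.
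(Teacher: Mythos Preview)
Your proof is correct and follows essentially the same route as the paper's: induction on $n$, splitting $\int u_{n+1}(z,\xi\cup x)\,(\xi\cup x)(\d z)$ into the piece over $\xi$ and the piece $\sum_{l=1}^{k}u_{n+1}(x_l,\xi\cup x)$, applying (\ref{mecke}) and (\ref{roulement}) to the first piece, and collecting via Lemma~\ref{lem1}. The one difference is how the unboundedness issue you flag is handled: rather than replacing $F$ by $F'=F\int u_{n+1}$, the paper absorbs the factor $\bigl(\int u_{n+1}(y,\xi)\,\xi(\d y)\bigr)^{1/n}$ into each of the $n$ nonnegative functions $u_k$, so that the inductive hypothesis applies directly with the original bounded $F$ and no separate integrability argument is needed.
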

\begin{proof}
	We will prove this result by induction on $n$. Let $u : E \times \X \rightarrow \R$ be a measurable nonnegative function. First note that by the \GNZ identity (\ref{mecke}), we have
	\begin{equation*}
		\E[ F(\xi) \int u(y,\xi) \,\xi(\d y) ] = \E[ \int F(\xi \cup x) u(x,\xi \cup x) \, c(x,\xi) \,\lambda(\d x) ].
	\end{equation*}
	Now, for $n\in \N^*$, let $u_1,\dots,u_{n+1}$ be nonnegative measurable functions on $E \times \X$. We have by induction:
	\begin{align*}
		&\E[ F \prod_{k=1}^{n+1} \int u_k(y, \xi) \,\xi(\d y) ] \\
		&= \E[ F(\xi)  \prod_{k=1}^n \int u_k(z, \xi) \big( \int u_{n+1} (y,\xi) \,\xi(\d y)\big)^{1/n} \,\xi(\d z)  ] \\
		&=  \sum_{k=1}^n  \sum_{\mathcal{P} \in \mathcal{T}_n^k} \E \Big[ \int_{E^k} F(\xi \cup x) u^\mathcal{P}\! (x,\xi \cup x) \hat{c}(x,\xi) \int u_{n+1}(z, \xi \cup x) \{\xi \cup x \} (\d z) \lambda_k(\d x) \Big].
	\end{align*}
	Here, the last part can be rewritten 
	\begin{equation*}
		 \int u_{n+1}(z, \xi \cup x) \{\xi \cup x \} (\d z) = \int u_{n+1}(z, \xi \cup x) \,\xi(\d z)  + \sum_{l =1}^k u_{n+1}(x_l,\xi \cup x).
	\end{equation*}
	Hence, after regrouping the terms, we find
	\begin{align*}
		&\E[ F \prod_{k=1}^{n+1} \int u_k(y, \xi) \,\xi(\d y) ]  \\
		&= \sum_{k=1}^n  \sum_{\mathcal{P} \in \mathcal{T}_n^k} \sum_{l=1 }^k \, \E \Big[ \int_{E^k} F(\xi \cup x) u^\mathcal{P}\! (x,\xi \cup x) u_{n+1} (x_l, \xi \cup x)\ \hat{c}(x,\xi) \,\lambda_k(\d x) \Big] \\
		&+ \sum_{k=1}^n  \sum_{\mathcal{P} \in \mathcal{T}_n^k} \E \Big[ \int_{E^k} F(\xi \cup x) u^\mathcal{P} (x,\xi \cup x) \big( \int u_{n+1}(z, \xi \cup x) \,\xi(\d z) \big) \, \hat{c}(x,\xi) \,\lambda_k(\d x) \Big] .
	\end{align*}
	Then, by Fubini's theorem, and the \GNZ formula (\ref{mecke}), the second sum is equal to
	\begin{align*}
		& \E \Big[ \int_{E^k} F(\xi \cup x) u^\mathcal{P} (x,\xi \cup x) \big( \int u_{n+1}(z, \xi \cup x) \,\xi(\d z) \big) \, \hat{c}(x,\xi) \,\lambda_k(\d x) \Big] \\
		&= \! \E \Big[ \! \int_{E^{k+1}} \! \! \! \! \! \! \! \! F(\xi \cup x \cup z) u^\mathcal{P}\! (x,\xi \cup x \cup z)  u_{n+1}(z, \xi \cup x \cup z) \hat{c}(x,\xi \cup z) c(z,\xi) \lambda_k(\d x) \lambda(\d z) \! \Big] \\
		&= \E \Big[ \int_{E^{k+1}} F(\xi \cup x \cup z) u^\mathcal{P} (x,\xi \cup x \cup z)  u_{n+1}(z, \xi \cup x \cup z)\, \hat{c}(x \cup z,\xi) \,\lambda_k(\d x) \lambda(\d z) \Big]\\
		&= \E \Big[ \int_{E^{k+1}} F(\xi \cup x) u^{\mathcal{P}\cup \{n+1\}} (x,\xi \cup x) \, \hat{c}(x,\xi) \,\lambda_{k+1}(\d x)  \Big],
	\end{align*}
	since by (\ref{roulement}), we know that $\hat{c}(\{x_1,\dots,x_k \},\xi \cup y) \, c(y,\xi) = \hat{c}(\{x_1,\hdots,x_k,y\},\xi )$. To summarize, we have found
	\begin{multline*}
		{ \E[ F \prod_{k=1}^{n+1} \int u_k(y, \xi) \,\xi(\d y) ] } \\
		{=  \sum_{k=1}^n  \sum_{\mathcal{P} \in \mathcal{T}_n^k} \Big( \sum_{l=1 }^k \, \E \Big[ \int_{E^k} F(\xi \cup x) u^\mathcal{P}\! (x,\xi \cup x) u_{n+1} (x_l, \xi \cup x)\ \hat{c}(x,\xi) \,\lambda_k(\d x) \Big] } \\
		\shoveright{+ \E \Big[ \int_{E^{k+1}} F(\xi \cup x) u^{\mathcal{P}\cup \{n+1\}} (x,\xi \cup x) \, \hat{c}(x,\xi) \,\lambda_{k+1}(\d x)  \Big] \Big),} \\
	\end{multline*}
	and we obtain the desired result by applying lemma \ref{lem1}.
\end{proof}
The previous quite general property includes many interesting cases. In particular, if all $u_k$, $k=1,\dots,n$ are equal, one generalizes the results in \cite{Pa} and obtains the moments of $\int u_x(\xi)\,\xi(\d x)$ :
\begin{cor}
	For any $n \in \N$, any measurable nonnegative function $u : E \times \X \rightarrow \R$; and any bounded function $F$ on $\X$, we have
	\begin{multline*}
		\E[ F \big( \int u(y,\xi) \,\xi(\d y) \big)^n ] = \sum_{k=1}^n  \sum_{\{P_1,\dots,P_k \} \in \mathcal{T}_n^k}  \E \Big[ \int_{E^k} F (\xi \cup x_1 \cup \dots \cup x_k)\\
			\shoveright{\prod_{l=1}^k u^{|P_l|} (x_l, \xi \cup x_1 \cup \dots \cup x_k )  \hat{c}(\{x_1,\dots,x_k \},\xi) \,\lambda(\d x_1)\dots \lambda(\d x_k)  \Big],}\\
	\end{multline*}
	where $|P_i|$ is the cardinality of $P_i$, $i=1,\dots,k$.
\end{cor}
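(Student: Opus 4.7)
The plan is to deduce this corollary as a direct specialization of Theorem \ref{prop1}, taking $u_1 = u_2 = \cdots = u_n = u$. There is no new idea required; the only thing to verify is that the notation in Theorem \ref{prop1} unpacks correctly into the statement of the corollary.

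First I would observe that setting $u_k = u$ for every $k$ makes the left-hand side of Theorem \ref{prop1} collapse from $\E[F \prod_{k=1}^n \int u_k(y,\xi)\,\xi(\d y)]$ into $\E[F(\int u(y,\xi)\,\xi(\d y))^n]$, which is the left-hand side we want. Next I would handle the function $u^\mathcal{P}$ appearing on the right-hand side: by its definition
\begin{equation*}
    u^\mathcal{P}(x,\xi) = \prod_{l=1}^k \prod_{i \in P_l} u_i(x_l,\xi),
\end{equation*}
and when $u_i = u$ for all $i$, the inner product over $i \in P_l$ just multiplies the same factor $u(x_l,\xi)$ together $|P_l|$ times, so $u^\mathcal{P}(x,\xi) = \prod_{l=1}^k u(x_l,\xi)^{|P_l|}$, exactly the quantity appearing in the corollary.

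Finally I would unfold the compact conventions: $x = (x_1,\dots,x_k)$ gives $\xi \cup x = \xi \cup x_1 \cup \cdots \cup x_k$, $\lambda_k(\d x) = \lambda(\d x_1)\cdots\lambda(\d x_k)$, and $\hat{c}(x,\xi)$ stands for $\hat{c}(\{x_1,\dots,x_k\},\xi)$. Substituting these into the formula from Theorem \ref{prop1} yields the statement of the corollary verbatim. There is no real obstacle here; the entire proof is a notational unpacking and amounts to a single line once Theorem \ref{prop1} is in hand.
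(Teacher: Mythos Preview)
Your proposal is correct and matches the paper's approach exactly: the paper presents this corollary immediately after Theorem~\ref{prop1} and simply remarks that it is the special case where all the $u_k$ are equal, without any further argument. Your unpacking of $u^{\mathcal{P}}$ and the compact notation is precisely what is needed.
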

This result includes the case where $u_x(\xi) = v(x)$ is a deterministic function:
\begin{cor}
	For any $n \in \N$, and any measurable nonnegative function $v$ on $E$, we have
	\begin{multline*}
		\E[ F \big( \int v(y) \,\xi(\d y) \big)^n ] =\sum_{k=1}^n  \sum_{\{ P_1,\dots,P_k \} \in \mathcal{T}_n^k} \int_{E^k} v(x_1)^{|P_1|} \dots v(x_k)^{|P_k|}\, \\
			\shoveright{ \E[ F(\xi \cup x_1 \cup \dots \cup x_k)\hat{c}(\{x_1,\dots,x_k \},\xi)] \,\lambda(\d x_1)\dots \lambda(\d x_k).}\\
	\end{multline*}
\end{cor}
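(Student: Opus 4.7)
The plan is to obtain this statement as an immediate specialization of the previous corollary, rather than redoing an induction. I would take the measurable nonnegative process $u : E \times \X \to \R$ appearing there to be the configuration-independent map
\begin{equation*}
u(x,\xi) := v(x), \qquad x \in E,\ \xi \in \X.
\end{equation*}
Then for any partition $\{P_1,\dots,P_k\} \in \mathcal{T}_n^k$ and any $l \in \{1,\dots,k\}$, the factor
\begin{equation*}
u^{|P_l|}(x_l,\, \xi \cup x_1 \cup \dots \cup x_k) \;=\; v(x_l)^{|P_l|}
\end{equation*}
no longer depends on $\xi$, so the integrand inside the expectation of the previous corollary splits into a deterministic piece $\prod_{l=1}^k v(x_l)^{|P_l|}$ and a random piece $F(\xi \cup x_1 \cup \dots \cup x_k)\,\hat{c}(\{x_1,\dots,x_k\},\xi)$.

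Next, I would pull the deterministic factor out of the expectation and swap the $\lambda$-integrals with the expectation using Fubini's theorem. This swap is justified because $v$ and $\hat{c}$ are nonnegative, $F$ is bounded, and the identity in the previous corollary with $u = |F|_\infty \cdot v$ already guarantees finiteness of the relevant integral (or alternatively one splits $F = F^+ - F^-$ and applies Fubini to each part separately). After this reorganization the sum over $\mathcal{T}_n^k$ and $k$ gives precisely the right-hand side of the claimed formula.

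In short, there is no real obstacle: the only thing to verify is that $u(x,\xi)=v(x)$ satisfies the hypotheses of the previous corollary (nonnegative and measurable, which is immediate) and that Fubini applies to interchange $\E$ and $\int \cdots \lambda(\d x_1)\cdots\lambda(\d x_k)$. The statement then follows by pure bookkeeping.
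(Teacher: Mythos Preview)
Your proposal is correct and follows exactly the paper's approach: the paper simply remarks that this corollary is the special case $u_x(\xi)=v(x)$ of the preceding corollary, and your additional remarks about pulling out the deterministic factor and applying Fubini are just the routine bookkeeping implicit in that specialization.
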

The previous corollary yields
\begin{multline*}
	{\rm Cov} \big(F, (\int v(x) \,\xi(\d x) )^n \big) = \sum_{k=1}^n  \sum_{\{ P_1,\dots,P_k \} \in \mathcal{T}_n^k}  \int_{E^k} v(x_1)^{|P_1|} \dots v(x_k)^{|P_k|}\, \\
			\shoveright{\quad \E[ (F(\xi \cup x_1 \cup \dots \cup x_k) - F(\xi) ) \hat{c}(\{x_1,\dots,x_k \},\xi)] \,\lambda(\d x_1)\dots \lambda(\d x_k).}\\
\end{multline*}
The case of $F=1$ is also of particular interest:
\begin{cor}
\label{cor2}
	For any $n \in \N$, any measurable nonnegative non-random functions $v_k : E \rightarrow \R$, $k = 1,\dots,n$, we have
	\begin{multline*}
		\E[ \prod_{k=1}^n \int v_k(y) \,\xi(\d y) ] = \sum_{k=1}^n  \sum_{\{ P_1,\dots,P_k \} \in \mathcal{T}_n^k} \int_{E^k}  \prod_{i \in P_1} v_i(x_1) \dots \prod_{i \in P_k} v_i(x_k) \\
			\shoveright{\rho(\{x_1,\dots,x_k \}) \,\lambda(\d x_1)\dots \lambda(\d x_k).}\\
	\end{multline*}
\end{cor}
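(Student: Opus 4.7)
The plan is to view this as a direct specialization of Theorem \ref{prop1} combined with identity (\ref{evc}). Specifically, I would take $F \equiv 1$ and, for each $k = 1,\dots,n$, set $u_k(y,\xi) := v_k(y)$, which is trivially a measurable nonnegative function on $E \times \X$ that happens not to depend on $\xi$. Plugging into Theorem \ref{prop1}, for any partition $\mathcal{P} = \{P_1,\dots,P_k\} \in \mathcal{T}_n^k$ the expression
\[
u^{\mathcal{P}}(x, \xi \cup x) = \prod_{l=1}^k \prod_{i \in P_l} u_i(x_l, \xi \cup x) = \prod_{l=1}^k \prod_{i \in P_l} v_i(x_l)
\]
becomes a purely deterministic function of $x = (x_1,\dots,x_k)$, so it factors out of the expectation.

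After this factorization, what remains inside the expectation is simply $\hat{c}(\{x_1,\dots,x_k\}, \xi)$. By Fubini's theorem (justified since everything is nonnegative) I can interchange the integral $\int_{E^k} \cdots \lambda_k(\d x)$ and the expectation, and then apply the identity (\ref{evc}), namely $\E[\hat{c}(\alpha,\xi)] = \rho(\alpha)$ for $L$-almost every $\alpha \in \X_0$, with $\alpha = \{x_1,\dots,x_k\}$. This replaces the expectation by $\rho(\{x_1,\dots,x_k\})$ and yields exactly the stated formula after summing over $k$ and over $\mathcal{T}_n^k$.

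There is no real obstacle here: the whole content is in Theorem \ref{prop1} and in (\ref{evc}); the corollary is essentially a restatement in the non-random, $F \equiv 1$ case. The only minor point worth mentioning is that one must implicitly regard the unordered partition notation $\{P_1,\dots,P_k\}$ consistently on both sides, but this is already handled by the convention adopted in Theorem \ref{prop1}.
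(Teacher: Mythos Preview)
Your proposal is correct and matches the paper's intended argument: the paper states Corollary \ref{cor2} without proof, presenting it as the special case $F\equiv 1$, $u_k(y,\xi)=v_k(y)$ of Theorem \ref{prop1}, with the expectation of $\hat{c}$ replaced by $\rho$ via (\ref{evc}). Your write-up is in fact more explicit than the paper itself, spelling out the Fubini step and the factorization of the deterministic integrand.
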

Note that we recover here a classical formula, which reads for $n=1,2,3$:
\begin{equation*}
	\E[  \int v(x) \,\xi(\d x) ] = \int_E v(x) \rho_1(x) \,\lambda(\d x),
\end{equation*}
\begin{equation*}
	\E[ \big( \int v(x) \,\xi(\d x)\big)^2 ] = \int_E v(x)^2 \rho_1(x) \,\lambda(\d x) + \int_{E^2} v(x)v(y)  \rho(\{x,y\}) \,\lambda(\d x)\lambda(\d y),
\end{equation*}
\begin{align*}
	\E[ \big( \int v(x) \,\xi(\d x)\big)^3 ] = \int_E v(x)^3 &\rho_1(x) \,\lambda(\d x) + 3\int_{E^2} v(x)^2 v(y)  \rho(\{x,y\}) \,\lambda(\d x)\lambda(\d y) \\
	&+ \int_{E^3} v(x) v(y) v(z)  \rho(\{x,y,z\}) \,\lambda(\d x)\lambda(\d y) \lambda(\d z).
\end{align*}
The previous corollary is interesting in itself because it is in fact an equivalent characterization of the existence of correlation functions. More precisely, we have the following result:
\begin{prop}
\label{charact}
	Let $(\rho_k)_{k \in \N}$ be a family of symmetrical, measurable functions, and $\rho_k : E^k \rightarrow \R$ for $k  \in \N$. Assume moreover that the measure $\mu$ is simple, in the sense that $\PP(\xi(x) \le 1)$ for all $x \in E$. Then, the measure $\mu$ possesses correlation functions $(\rho_k)_{k \in \N}$ (with respect to $\lambda$) if and only if, for any $n \in \N$, and any measurable nonnegative functions $v_k : E \rightarrow \R$, $k = 1,\dots,n$, we have
	\begin{multline*}
		\E[\, \prod_{k=1}^n \int v_k(x) \,\xi(\d x) ] =  \sum_{k=1}^n  \sum_{\{ P_1,\dots,P_k \} \in \mathcal{T}_n^k} \int_{E^k} \prod_{i \in P_1} v_i(x_1) \dots \prod_{i \in P_k} v_i(x_k) \\
			\shoveright{\rho_k(x_1,\dots,x_k) \,\lambda(\d x_1)\dots \lambda(\d x_k).}\\
	\end{multline*}
\end{prop}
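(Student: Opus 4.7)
My approach is to prove the two implications independently; both are short given the groundwork laid in the paper.

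For the direction ($\Rightarrow$), I would simply apply Corollary \ref{cor2} with $F \equiv 1$: the resulting identity is the claimed moment formula, once one identifies $\rho(\{x_1,\dots,x_k\})$ with $\rho_k(x_1,\dots,x_k)$ through the symmetry and definition of the correlation functions. Hence this direction requires no new argument.

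For the direction ($\Leftarrow$), I would invoke the equivalent characterization of correlation functions stated in the proposition immediately following the definition: it suffices to verify, for every $n \in \N^*$ and every family $A_1,\dots,A_n$ of pairwise disjoint bounded Borel subsets of $E$, the factorial moment identity
\[
\E\Big[\prod_{i=1}^n \xi(A_i)\Big] = \int_{A_1\times\cdots\times A_n} \rho_n(x_1,\ldots,x_n)\,\lambda(\d x_1)\cdots\lambda(\d x_n).
\]
Specializing the assumed moment formula to $v_k = \1{A_k}$, I would observe that for any partition $\{P_1,\ldots,P_k\} \in \mathcal{T}_n^k$ containing a block $P_l$ with $|P_l| \ge 2$, the factor $\prod_{i \in P_l} \1{A_i}(x_l) = \1{\bigcap_{i \in P_l} A_i}(x_l)$ vanishes identically, since the $A_i$ are pairwise disjoint. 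Only the singleton partition $\{\{1\},\ldots,\{n\}\} \in \mathcal{T}_n^n$ survives, and by symmetry of $\rho_n$ the resulting integral collapses to the right-hand side displayed above, yielding the desired identity.

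The simplicity assumption on $\mu$ is what legitimizes appealing to this equivalent factorial-moment characterization, which need not be valid for non-simple processes. I expect no substantial obstacle: modulo Corollary \ref{cor2}, the only combinatorial point is the clean disappearance of non-singleton blocks under disjoint indicators, which is precisely where the converse gets its traction.
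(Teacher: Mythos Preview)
Your proposal is correct and matches the paper's proof in substance: the reverse direction is identical (specialize to $v_k=\1{A_k}$ for pairwise disjoint $A_k$, observe that only the singleton partition survives, and invoke the factorial-moment characterization valid for simple processes), while for the forward direction the paper re-derives the combinatorial identity underlying Corollary~\ref{cor2} rather than citing it, so your shortcut is legitimate and arguably cleaner.
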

\begin{proof}
	Assume that we have
	\begin{equation*}
		\int_\X \sum_{\alpha \subset \xi, \ \alpha \in \X_0} f ( \alpha) \,\mu(\d \xi) = \int_{\X_0} f ( \alpha )\, \rho(\alpha) \, L(\d \alpha) ,
	\end{equation*}
	where $f$ is any measurable nonnegative function on $\X_0$. Then, for $k \in \N$,
	\begin{equation*}
		\int_\X \sum_{\alpha \subset \xi, \ |\alpha| = k} f ( \alpha) \,\mu(\d \xi) = \frac{1}{k!} \int_{E^k} f (\{ x_1,\dots,x_k\} )\, \rho_k(x_1,\dots,x_k) \, \lambda(\d x_1)\dots \lambda(\d x_k).
	\end{equation*}
	Now, we can write for $n \in \N$ and $\xi \in \X$,
	\begin{equation*}
		\sum_{x_1,\dots,x_n \in \xi} \! \! \! v_1(x_1)\dots v_n (x_n) = n! \sum_{k=1}^n \sum_{\{ P_1,\dots,P_k\} \in \mathcal{T}_n^k} \sum_{\substack{\alpha \subset \xi,\ |\alpha | = k \\ \alpha = \{ x_1,\dots, x_k \}}} \!  \prod_{i \in P_1} v_i (x_1) \dots \! \prod_{i \in P_k}v_i (x_k),
	\end{equation*}
	where the $n!$ appears since when we write $\{x_1,\dots,x_k \} \subset \xi$, we only choose ordered subsets of $\xi$. Then, we find the desired result by taking the expectation of the previous equality. Indeed, we have by definition
	\begin{multline*}
		\E \big[ \sum_{\substack{\alpha \subset \xi,\ |\alpha | = k \\ \alpha = \{ x_1,\dots, x_k \}}}  \prod_{i \in P_1} v_i (x_1) \dots \prod_{i \in P_k}v_i (x_k) \big] \\
		\shoveright{= \frac{1}{n!}  \E \big[ \int_{E^k} \prod_{i \in P_1} v_i(x_1) \dots \prod_{i \in P_k} v_i(x_k)\hat{c} (\{ x_1,\dots,x_k \},\xi ) \,\lambda(\d x_1)\dots \lambda(\d x_k). \big],}\\
	\end{multline*}
	and we conclude by using (\ref{evc}).
	
	On the other hand, assume that there exists some symmetrical measurable functions $(\rho_k)_{k \in \N}$, such that for any $n \in \N$, and any measurable nonnegative functions $v^k : E \rightarrow \R$, $k = 1,\dots,n$, we have
	\begin{align*}
		\E[\, \prod_{k=1}^n \int v_k(y) \,\xi(\d y) ] =  \sum_{k=1}^n \sum_{\{ P_1,\dots,P_k\} \in \mathcal{T}_n^k}  \int_{E^k} & \prod_{i \in P_1} v_i(x_1) \dots \! \prod_{i \in P_k} v_i(x_k) \\
			&\rho_k(x_1,\dots,x_k ) \,\lambda(\d x_1)\dots \lambda(\d x_k).
	\end{align*}	
	Let $A_1,\dots,A_n$ be $n$ disjoint Borel subsets of $E$. Take $v^k = \1{A_k}$, $k=1,\dots,n$. Then, the different terms of the right-hand side sum are all equal to $0$, except for the subdivision consisting of only singletons. Hence,
	\begin{equation*}
		\E[\, \prod_{k=1}^n \xi(A_k) ] =   \int_{A_1 \times \dots \times A_n} \rho_k(x_1,\dots,x_n ) \,\lambda(\d x_1)\dots \lambda(\d x_n),
	\end{equation*}
	which means that $\mu$ has correlation functions $(\rho_k)_{k \in \N}$ since $\mu$ is a simple point process.
\end{proof}

\section{Extended moment formulae for the divergence}

The aim of this section is to obtain analogue formulae involving the divergence of a general operator. Let us now start by defining a new random measure $\nu$ by the formula
\begin{equation*}
	\int f(y,\xi) \,\nu(\d y) = \int f(y,\xi) \, \xi(\d y) - \int f(y,\xi) c(y,\xi) \, \lambda(\d y),
\end{equation*}
for $f$ on $E \times \X$ such that $\E [ \int_E | f(y,\xi) | c(y,\xi) \,\lambda(\d y)  ] < \infty$. 

In order to ensure the rest of the results in this section, we will need to make another assumption on $\mu$ which will be assumed to hold henceforth.
\begin{hyp}
\label{hypmoments}
	We assume that the moments of the point process $\mu$ are all finite, i.e. that for all $\varphi$ bounded, compactly supported function, and for all $n \in \N$,
	\begin{equation*}
		\E[ \big( \int \varphi(x)\,\xi(\d x) \big)^n ] < \infty.
	\end{equation*}
\end{hyp}
By corollary \ref{cor2} the previous condition can be rewritten as 
\begin{equation*}
	\forall n \in \N,\quad \int_{\Lambda^n} \rho_n(x_1,\dots,x_n) \,\lambda(\d x_1,\dots,\d x_n) < \infty,
\end{equation*}
for all compacts $\Lambda$ of $E$.

Then, we prove the following:
\begin{prop}
\label{prop2}
	We assume that hypothesis \ref{hypmoments} is verified. Then, for any $n \in \N$, any bounded process $u : E \times \X \rightarrow \R$ with compact support on $E$, and any bounded function $F$ on $\X$, we have
	\begin{multline*}
		\E[ F \big( \int u(y,\xi) \,\nu(\d y) \big)^n ] = \sum_{i=0}^n (-1)^i \binom{n}{i}  \sum_{k=1}^n  \sum_{\{P_1,\dots,P_k\} \in \mathcal{T}_{n-i}^k}  \\
		\shoveright{\E \Big[ \int_{E^k}\, \hat{c}(x,\xi) F (\xi \cup x) \prod_{l=1}^k u( x_l,\xi \cup x)^{| P_l |} \big( \int u(z,\xi \cup x) c(z,\xi \cup x) \,\lambda(\d z)  \big)^i \,\lambda_k(\d x) \Big].}\\
	\end{multline*}
\end{prop}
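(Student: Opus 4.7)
The plan is to reduce Proposition \ref{prop2} to Theorem \ref{prop1} (or rather its first corollary) via a binomial expansion that exploits the very definition of the signed measure $\nu$. Writing
$$\int u(y,\xi)\,\nu(\d y) = \int u(y,\xi)\,\xi(\d y) - \int u(z,\xi)\,c(z,\xi)\,\lambda(\d z),$$
the binomial theorem gives
$$\Bigl(\int u\,\d\nu\Bigr)^n = \sum_{i=0}^n (-1)^i \binom{n}{i}\Bigl(\int u(y,\xi)\,\xi(\d y)\Bigr)^{n-i}\Bigl(\int u(z,\xi)\,c(z,\xi)\,\lambda(\d z)\Bigr)^i.$$
Multiplying by $F$ and taking expectation reduces the problem to computing each mixed moment $\E\bigl[F\,(\int u\,\d\xi)^{n-i}(\int u c\,\d\lambda)^i\bigr]$ separately.

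For fixed $i$, the key idea is to absorb the compensator factor into the test function: set
$$G_i(\xi) := F(\xi)\Bigl(\int u(z,\xi)\,c(z,\xi)\,\lambda(\d z)\Bigr)^i,$$
and apply the first corollary of Theorem \ref{prop1} at exponent $n-i$, with $G_i$ in place of $F$. This yields
$$\E\Bigl[G_i \Bigl(\int u\,\d\xi\Bigr)^{n-i}\Bigr] = \sum_{k=1}^{n-i}\sum_{\{P_1,\dots,P_k\}\in\mathcal{T}_{n-i}^k}\E\Bigl[\int_{E^k} G_i(\xi\cup x)\prod_{l=1}^k u(x_l,\xi\cup x)^{|P_l|}\,\hat{c}(x,\xi)\,\lambda_k(\d x)\Bigr].$$
Substituting back the definition of $G_i$ and summing over $i$ weighted by $(-1)^i\binom{n}{i}$ produces exactly the right-hand side of the proposition, with the convention that the degenerate $i=n$ term corresponds to the empty partition with $\hat{c}(\emptyset,\xi)=1$.

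The main obstacle is technical rather than conceptual: Theorem \ref{prop1} and its corollary are stated for nonnegative $u$ and bounded $F$, whereas here $u$ is signed and $G_i$ is typically unbounded. To handle this, I would first verify that under Hypothesis \ref{hypmoments} together with the boundedness and compact support of $u$, all quantities are absolutely integrable. Concretely, if $|u|\le M$ and $u$ vanishes outside a compact $\Lambda$, then $|\int u\,\d\xi|\le M\xi(\Lambda)$ and, by the \GNZ identity (\ref{mecke}), $\E[(\int |u|\, c\,\d\lambda)^j]\le M^j\,\E[\xi(\Lambda)^j]<\infty$ for every $j\in\N$ thanks to Corollary \ref{cor2}. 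This justifies the binomial expansion and the use of Fubini throughout. Finally, Theorem \ref{prop1} and its corollary extend from nonnegative $u$ and bounded $F$ to signed $u$ and integrable $G_i$ by writing $u=u^+-u^-$ and invoking linearity: their proofs rely only on the \GNZ identity and Fubini, both of which remain valid as soon as the absolute-value versions are integrable, as established above.
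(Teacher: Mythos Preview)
Your proof is correct and follows exactly the paper's approach: binomial expansion of $(\int u\,\d\nu)^n$ followed by an application of Theorem~\ref{prop1} (via its first corollary) to each term, with the compensator factor absorbed into the test function. The paper's own proof is in fact much terser than yours---it states only the binomial identity and then says ``apply proposition~\ref{prop1}''---so your additional care with integrability (signed $u$, unbounded $G_i$) and with the degenerate $i=n$ term actually fills in details the paper leaves implicit.
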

\begin{proof}
	This is a direct consequence of the binomial formula
	\begin{align*}
		\E[ &F \big( \int u(y,\xi) \,\nu(\d y) \big)^n ] \\
		&= \sum_{i=0}^n (-1)^i \binom{n}{i} \E \Big[ F \big( \int u(y,\xi) \,\xi(\d y) \big)^{n-i} \big( \int u(y,\xi)c(y,\xi)\,\lambda(\d y) \big)^i \Big],
	\end{align*}
	and thus we obtain the desired result by applying proposition \ref{prop1}.
\end{proof}
Recall that the associated formula for $\pi^1$ (the Poisson process of intensity $\lambda$), as obtained in \cite{Pa}, is
\begin{multline*}
		\E_{\pi^1}[ F \big( \int u(y,\xi) \,\nu(\d y) \big)^n ] \! = \! \! \sum_{i=0}^n (-1)^i \binom{n}{i} \! \!  \sum_{k=1}^n \! \sum_{\{P_1,\dots,P_k\} \in \mathcal{T}_{n-i}^k} \! \! \! \! \! \! \! \! \! \! \! \! \! \E_{\pi^1} \Big[ \int_{E^{i+k}}\! \! \! \! \! \! \! \! \! F(\xi \cup \{ x_1 \cup \dots \cup x_k \}) \\
	\shoveright{\prod_{l=1}^k u^{|P_l|} (x_l, \xi \cup x_1 \cup \dots \cup x_k ) \prod_{l=k+1}^{i+k} u(x_l,\xi \cup x_1 \cup \dots \cup x_k )\,\lambda(\d x_1)  \dots \lambda( \d x_{i+k})  \Big].}\\
\end{multline*}
Here, by comparison, the general formula can be written as 
\begin{multline*}
		\E[ F \big( \int u(y,\xi) \,\nu(\d y) \big)^n ]  \\
		\shoveright{=  \sum_{i=0}^n (-1)^i \binom{n}{i}  \sum_{k=1}^n \! \sum_{\{P_1,\dots,P_k\} \in \mathcal{T}_{n-i}^k} \! \! \! \! \! \! \! \!  \E \Big[ \int_{E^{i+k}} \! \! \! \! \! \! \!  G_k(\{x_1,\dots,x_{i+k}\},\xi)  F(\xi \cup \{ x_1 \cup \dots \cup x_k \})} \\
	\shoveright{\prod_{l=1}^k u^{|P_l|} (x_l, \xi \cup x_1 \cup \dots \cup x_k ) \prod_{l=k+1}^{i+k} u(x_l,\xi \cup x_1 \cup \dots \cup x_k )\,\lambda(\d x_1)  \dots \lambda( \d x_{i+k})  \Big].}\\
\end{multline*}
where
\begin{align*}
	G_k(\{x_1,&\dots,x_{i+k}\},\xi)  \\
	&= \hat{c}(\{x_1,\dots,x_k \},\xi) c(x_{i+1},\xi \cup x_1 \cup \dots \cup x_k)\dots c(x_{i+k},\xi \cup x_1 \cup \dots \cup x_k) \\
		&= \prod_{l=1}^k c(x_l , \xi \cup x_1 \cup \dots \cup x_l ) \prod_{l=i+1}^{i+k} c(x_l,\xi \cup x_1 \cup \dots \cup x_k).
\end{align*}

Let us now introduce the so-called divergence operator $\delta$. 
\begin{defin}[Divergence operator]
	For any measurable $u : E \times \X \rightarrow \R$ such that $\E[ \int | u(y,\xi)|  \,c(y,\xi)\,\lambda(dy) ] < \infty$, we define $\delta(u)$ as
\begin{equation*}
	\delta (u) = \int u(y,\xi \moins y) \, \nu(\d y) = \int u(y,\xi \moins y) \, \xi(\d y) - \int u(y,\xi) \, c(y,\xi) \,\lambda(\d y).
\end{equation*}
\end{defin}
We can notice that the divergence of a bounded process $u$ is correctly defined since for such a process,
\begin{equation*}
	\E[ \int | u(y,\xi)| \,c(y,\xi)\,\lambda(dy) ] \le C \, \E[ \int c(y,\xi)\,\lambda(dy) ] = 1.
\end{equation*}

The next proposition gives a moment formula for this newly introduced operator.
\begin{prop}
\label{prop4}
	For any $n \in \N$, any bounded process $u : E \times \X \rightarrow \R$, with compact support on $E$, and any bounded function $F$ on $\X$, we have
	\begin{multline*}
		\E[ F \big( \delta(u)  \big)^n ] \\
		\shoveright{ = \sum_{i=0}^n (-1)^i \binom{n}{i}  \sum_{k=1}^n \sum_{\{P_1,\dots,P_k\} \in \mathcal{T}_{n-i}^k }\! \! \! \! \! \! \!  \E \Big[ \int_{E^{i+k}} \! \! \! \! \! \!   G_k(\{x_1,\dots,x_{i+k}\},\xi)  F (\xi  \cup  \{ x_1, \dots, x_k \} )} \\
		\shoveright{\prod_{l=k+1}^{i+k} \! \! \! \! u (x_l, \xi  \cup  \{ x_1, \dots, x_k \} \! ) \prod_{l=1}^k  u^{|P_l |} (x_l, \xi \cup \{ x_1,\dots,\tilde{x_l},\dots,x_k\}   ) \lambda(\d x_1)  \dots \lambda( \d x_{i+k}) \Big],}\\
	\end{multline*}
	where, $G_k$ is the function defined in the previous paragraph. The notation \\  $\{ x_1,\dots,\tilde{x_l},\dots,x_k\} $ stands for $ \{ x_1,\dots,x_{l-1},x_{l+1},\dots,x_k\} $.
\end{prop}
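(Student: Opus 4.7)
My plan is to reduce this to Theorem \ref{prop1} via the binomial theorem, exploiting that $\tilde u(y, \xi) := u(y, \xi \moins y)$ is itself a perfectly admissible integrand. Starting from
\begin{equation*}
\delta(u)^n \;=\; \sum_{i=0}^{n}(-1)^i \binom{n}{i}\Bigl(\int u(y,\xi\moins y)\,\xi(\d y)\Bigr)^{n-i}\Bigl(\int u(y,\xi)\, c(y,\xi)\,\lambda(\d y)\Bigr)^i,
\end{equation*}
I would, for each fixed $i$, absorb the compensator factor into a new bounded functional
\begin{equation*}
\widetilde F_i(\xi) \;:=\; F(\xi)\Bigl(\int u(y,\xi)\, c(y,\xi)\,\lambda(\d y)\Bigr)^i,
\end{equation*}
so that $\E[F\,\delta(u)^n]$ becomes $\sum_i (-1)^i\binom{n}{i}\, \E\bigl[\widetilde F_i \prod_{k=1}^{n-i} \int \tilde u(y,\xi)\,\xi(\d y)\bigr]$. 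Then I apply Theorem \ref{prop1} with all integrands equal to $\tilde u$ and with test function $\widetilde F_i$. Boundedness and compact support of $u$, together with Hypothesis \ref{hypmoments}, guarantee that the theorem applies (after the standard $u = u^+ - u^-$ splitting if one insists on nonnegativity).

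The crucial substitution step is that, for the integration variables $x = (x_1,\dots,x_k)$ produced by Theorem \ref{prop1} and with $\xi$ replaced by $\xi \cup x$, one has
\begin{equation*}
\tilde u(x_l, \xi \cup x)^{|P_l|} \;=\; u\bigl(x_l,(\xi\cup x)\moins x_l\bigr)^{|P_l|} \;=\; u\bigl(x_l,\xi\cup\{x_1,\dots,\tilde x_l,\dots,x_k\}\bigr)^{|P_l|},
\end{equation*}
which is precisely the factor appearing in the stated formula. The remaining piece $\bigl(\int u(y,\xi\cup x)\,c(y,\xi\cup x)\,\lambda(\d y)\bigr)^i$ hidden inside $\widetilde F_i(\xi\cup x)$ is then expanded as an iterated integral over fresh variables $x_{k+1},\dots,x_{k+i}$; this produces the trailing product $\prod_{l=k+1}^{k+i} u(x_l,\xi\cup\{x_1,\dots,x_k\})$ together with an extra factor $\prod_{l=k+1}^{k+i} c(x_l, \xi\cup\{x_1,\dots,x_k\})$. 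Multiplying the latter by the $\hat c(\{x_1,\dots,x_k\},\xi)$ from Theorem \ref{prop1}, and invoking the chain rule (\ref{roulement}) to re-expand $\hat c$ along the sequence, yields exactly $G_k(\{x_1,\dots,x_{i+k}\},\xi)$ as defined.

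The main obstacle is purely notational bookkeeping: one must keep track of which variables come from the partition part of the expansion (indices $1,\dots,k$) and which from the compensator part (indices $k+1,\dots,k+i$), and confirm that the $\tilde x_l$ exclusion propagates correctly through the substitution $y \leftarrow x_l$ inside $\tilde u(y,\xi \cup x)$. There is no new analytical content beyond Theorem \ref{prop1} and the Papangelou chain rule (\ref{roulement}); Hypothesis \ref{hypmoments} and the compact support assumption on $u$ legitimize all the Fubini exchanges.
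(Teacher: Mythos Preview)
Your argument is correct and is essentially the paper's own proof unpacked: the paper simply applies Proposition~\ref{prop2} (itself just the binomial expansion followed by Theorem~\ref{prop1}) with $v(y,\xi)=u(y,\xi\moins y)$, which is exactly your substitution $\tilde u$. The only slip is calling $\widetilde F_i$ ``bounded'' --- it need not be --- but you immediately cover this with Hypothesis~\ref{hypmoments} and the compact support of $u$, which is the same justification the paper relies on.
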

\begin{proof}
	Recall that by definition,
	\begin{equation*}
		\E[ F \big( \delta(u)  \big)^n ] = \E[ F \big(   \int v(y,\xi) \, \nu(\d y) \big)^n ] ,
	\end{equation*}
	where $v(y,\xi) =  u(y,\xi \moins y)$, for $(y,\xi) \in E \times \X$. Therefore, we can apply proposition \ref{prop2}, and we obtain the desired result.
\end{proof}

The previous definition of the operator $\delta$ is justified by the duality formula with respect to the difference gradient, defined below.
\begin{defin}[Difference operator]
	For $F : \X \rightarrow \R$, we define $D F$, the difference operator applied to $F$, as follows:
	\begin{equation*}
		\left.  \begin{array}{l l l l}
		 D F: & E \times \X  &\longrightarrow \ &\R \\ 
		& (x,\xi)  &\longmapsto & D_x F(\xi) = F(\xi \cup x) - F(\xi \moins x)	.
		\end{array}\right.
	\end{equation*}
\end{defin}
Then, the previous definition of $\delta$ satisfies the following corollary of proposition \ref{prop4}.
\begin{cor}[Duality relation]
	For any bounded function $F$ on $\X$, and a process $u : (x,\xi) \mapsto u(x,\xi)$  in ${\rm Dom } (\delta)$, we have
	\begin{equation*}
		\E[ F \delta(u) ] = \E[ \int_E D_z F(\xi) u(z,\xi) c(z,\xi) \, \lambda(\d z)  ].
	\end{equation*}
\end{cor}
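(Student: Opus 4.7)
The corollary is in fact the case $n=1$ of Proposition~\ref{prop4}, but it admits a one-line derivation directly from the \GNZ identity~(\ref{mecke}), which I would present in preference to specializing the more elaborate $n$-th moment formula.

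First, I would split $\E[F\delta(u)]$ along the definition of the divergence,
\begin{equation*}
\E[F\delta(u)] = \E\Bigl[F(\xi)\int u(y,\xi\moins y)\,\xi(\d y)\Bigr] - \E\Bigl[\int F(\xi)\, u(y,\xi)\, c(y,\xi)\,\lambda(\d y)\Bigr].
\end{equation*}
The second term is already in the desired shape and will furnish the $-F(\xi)$ contribution to $D_z F$. The task is to rewrite the first term so that an $F(\xi\cup z)$ appears under a $c(z,\xi)\,\lambda(\d z)$ integral.

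Second, I would feed the test function $(z,\eta)\mapsto F(\eta\cup z)\,u(z,\eta)$ into the \GNZ identity~(\ref{mecke}). Its left-hand side becomes
\begin{equation*}
\E\Bigl[\sum_{z\in\xi}F\bigl((\xi\moins z)\cup z\bigr)\,u(z,\xi\moins z)\Bigr] = \E\Bigl[F(\xi)\int u(z,\xi\moins z)\,\xi(\d z)\Bigr],
\end{equation*}
since $(\xi\moins z)\cup z=\xi$ when $z\in\xi$, while its right-hand side is $\E[\int F(\xi\cup z)\,u(z,\xi)\,c(z,\xi)\,\lambda(\d z)]$. Subtracting the two terms yields
\begin{equation*}
\E[F\delta(u)] = \E\Bigl[\int\bigl(F(\xi\cup z)-F(\xi)\bigr)\,u(z,\xi)\,c(z,\xi)\,\lambda(\d z)\Bigr].
\end{equation*}

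Third, I would identify the integrand $F(\xi\cup z)-F(\xi)$ with $D_zF(\xi)=F(\xi\cup z)-F(\xi\moins z)$. This is the only delicate step: one must argue that with respect to the measure $c(z,\xi)\,\lambda(\d z)\,\mu(\d\xi)$ the event $\{z\in\xi\}$ is negligible, so that $F(\xi\moins z)=F(\xi)$ almost everywhere. Under the simplicity of $\mu$ together with the diffuseness of $\lambda$ on the atomic support of typical configurations (standard in the Papangelou framework), this is immediate, since $\lambda$-a.e. $z$ lies outside the countable set $\xi$. The first two steps are essentially mechanical applications of the definition of $\delta$ and of (\ref{mecke}); this third measure-theoretic identification is where the only genuine (but minor) care is required.
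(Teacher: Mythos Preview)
Your proof is correct and is precisely what one obtains when one unwinds the paper's claim that this is ``a corollary of Proposition~\ref{prop4}'' at $n=1$: the $i=0$ term of Proposition~\ref{prop4} is exactly your \GNZ step producing $F(\xi\cup z)u(z,\xi)c(z,\xi)$, the $i=1$ term is the subtracted $F(\xi)u(z,\xi)c(z,\xi)$, and your direct route via (\ref{mecke}) simply bypasses the intermediate general machinery. Your remark on the identification $F(\xi\cup z)-F(\xi)=D_zF(\xi)$ for $\lambda\otimes\mu$-a.e.\ $(z,\xi)$ is also well placed, as the paper leaves this implicit.
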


Here, we say that a process $u : (x,\xi) \mapsto u(x,\xi)$ belongs to $\dom( \delta)$ whenever there exists a constant $c$ such that for any $F\in L^2(\mu)$, 
\begin{equation*}
	\Big| \int_{\X} \int_E D_z F(\xi) u(z,\xi) \,C_\mu(\d z,\d \xi) \Big| \le c \|F \|_{L^2(\mu)}.
\end{equation*}
Recall that in the Poisson case, $C_\mu = \lambda \otimes \mu$ and we recover the classical duality relationship.

One also finds an extended Skorohod isometry in the case of a more general point process (see \cite{DVb}):
\begin{cor}
\label{cor1}
	\begin{align*}
		\E[ &\delta(u)^2 ] = \E[ \int u(y,\xi)^2 c(y,\xi) \, \lambda(\d y) ] \\
			&+ \E[ \int \int u(y,\xi \cup z) \big(u(z,\xi \cup y) - 2 u(z,\xi) \big) \hat{c}(\{ y,z \},\xi) \, \lambda(\d y) \lambda(\d z) ] \\
			&+ \E[ \big( \int u(y,\xi) c(y,\xi) \, \lambda( \d y) \big)^2 ].
	\end{align*}
\end{cor}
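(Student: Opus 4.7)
The plan is to derive this corollary by specializing Proposition \ref{prop4} to $n=2$ and $F=1$; equivalently, and perhaps more transparently, I would expand $\delta(u)^2$ by the binomial identity and handle the three resulting pieces separately using the \GNZ formula (\ref{mecke}), Theorem \ref{prop1}, and the commutation relation (\ref{roulement}).

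Concretely, starting from
\begin{equation*}
	\delta(u)^2 = \Bigl(\int u(y,\xi\moins y)\,\xi(\d y)\Bigr)^2 - 2\int u(y,\xi\moins y)\,\xi(\d y)\int u(z,\xi)c(z,\xi)\,\lambda(\d z) + \Bigl(\int u(y,\xi)c(y,\xi)\,\lambda(\d y)\Bigr)^2,
\end{equation*}
the last summand is already the third term of the corollary. For the first summand I would apply Theorem \ref{prop1} with $n=2$, $F=1$, and integrand $v(y,\xi):=u(y,\xi\moins y)$: the partition $\{\{1,2\}\}$ contributes $\E[\int u(y,\xi)^2 c(y,\xi)\,\lambda(\d y)]$ after the simplification $v(x,\xi\cup x)=u(x,\xi)$, while the partition $\{\{1\},\{2\}\}$ contributes $\E[\int \int u(y,\xi\cup z)u(z,\xi\cup y)\hat{c}(\{y,z\},\xi)\,\lambda(\d y)\lambda(\d z)]$. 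For the cross summand I would absorb the $\xi(\d y)$ integration via \GNZ and invoke the commutation $c(y,\xi)c(z,\xi\cup y)=\hat{c}(\{y,z\},\xi)$ from (\ref{roulement}) to rewrite it as $-2\E[\int \int u(y,\xi)u(z,\xi\cup y)\hat{c}(\{y,z\},\xi)\,\lambda(\d y)\lambda(\d z)]$.

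The final step is to combine the two double integrals into a single one. Since $\hat{c}(\{y,z\},\xi)$ is symmetric in the pair $(y,z)$, I may swap $y\leftrightarrow z$ in the cross-term contribution to rewrite it as $-2\E[\int \int u(y,\xi\cup z)u(z,\xi)\hat{c}(\{y,z\},\xi)\,\lambda(\d y)\lambda(\d z)]$, and then factor out $u(y,\xi\cup z)$ to produce the middle summand $u(y,\xi\cup z)(u(z,\xi\cup y)-2u(z,\xi))$ of the corollary. The only genuine bookkeeping step is this symmetrization, needed so the two double integrals line up and the compact factored form appears; everything else reduces to a single application each of \GNZ, Theorem \ref{prop1}, and the commutation relation, so no substantial obstacle is expected.
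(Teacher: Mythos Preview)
Your proposal is correct and follows essentially the same route as the paper: the corollary is stated without a separate proof because it is the specialization of Proposition~\ref{prop4} (equivalently, of Proposition~\ref{prop2} applied to $v(y,\xi)=u(y,\xi\moins y)$) to $n=2$, $F=1$, and your binomial expansion combined with Theorem~\ref{prop1}, the \GNZ formula, and the commutation relation (\ref{roulement}) is exactly how that proposition is obtained. The only step you add beyond a straight readout of Proposition~\ref{prop4} is the $y\leftrightarrow z$ symmetrization to reach the compact factored form, which is valid since $\hat{c}(\{y,z\},\xi)$ is symmetric.
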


\section{Random transformation of the point process}

\subsection{Exvisibility}

The goal of this section is to give an application of the previous moment formulae. As was explained in the introduction, we wish to study a random transformation of the point process measure $\mu$. The main condition that enables us to study the random transformation in depth is that of exvisibility (to our knowledge, this notion was first introduced in \cite{DVb}). We will start by recalling the definition of exvisibility as well as most of the main properties.
\begin{defin}[Exvisibility]
	Define the exvisible $\sigma$-algebra $\mathcal{Z}$ to be the $\sigma$-algebra generated by sets of the form $B \times U$,  $B \in \mathcal{B}$, $U \in \mathcal{F}_{B^c}$. A stochastic process is said to be exvisible if it is measurable with respect to $\mathcal{Z}$ on $E \times \X$.
\end{defin}
In particular, let us introduce what we will call simple exvisible processes:
\begin{defin}[Simple exvisible process]
	We call simple exvisible process a stochastic process of the form
	\begin{equation*}
		u(x,\xi) = \sum_{i=1}^N \1{A_i} (x) F_i(\xi)	,
	\end{equation*}
	where $N \in \N$, $A_1,\dots,A_N \in \mathcal{B}$ and the $F_i$ are $\F_{A_i^c}$-measurable for $i=1,\dots,N$.
\end{defin}
This notion is important because of the following proposition:
\begin{prop}
	A stochastic process is exvisible if and only if it is the limit of simple exvisible processes, i.e. there exists a sequence $(A_k^n)$ of subdivisions of $E$, and $\F_{A_i^c}$-measurable $(F_k^n)$ such that
	\begin{equation*}
		\forall x \in E, \ \forall \xi \in \X, \quad u(x,\xi) = \lim_n \sum_k \1{A_k^n}(x) F_k^n(\xi_{(A_k^n)^c}).
	\end{equation*}
\end{prop}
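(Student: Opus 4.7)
The plan is to prove both directions of the equivalence; the easy direction is a direct measurability argument, while the harder direction follows from a functional monotone class theorem applied to the generators of $\mathcal{Z}$.

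\textbf{Easy direction.} First I would observe that every simple exvisible process is $\mathcal{Z}$-measurable. Indeed, for $A \in \mathcal{B}$ and $F$ being $\F_{A^c}$-measurable, the map $(x,\xi) \mapsto \1{A}(x) F(\xi)$ is $\mathcal{Z}$-measurable since for any Borel $V \subset \R \setminus \{0\}$ its preimage equals $A \times F^{-1}(V)$, which lies in $\mathcal{Z}$ by definition. Finite sums and pointwise limits of $\mathcal{Z}$-measurable functions remain $\mathcal{Z}$-measurable, so any pointwise limit of simple exvisible processes is exvisible.

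\textbf{Converse direction.} For the reverse implication, I would apply a functional monotone class theorem. The family $\mathcal{C} = \{B \times U : B \in \mathcal{B},\ U \in \F_{B^c}\}$ is stable under finite intersection since $(B_1 \times U_1) \cap (B_2 \times U_2) = (B_1 \cap B_2) \times (U_1 \cap U_2)$ and the inclusion $\F_{B_i^c} \subset \F_{(B_1 \cap B_2)^c}$ gives $U_1 \cap U_2 \in \F_{(B_1 \cap B_2)^c}$. Thus $\mathcal{C}$ is a $\pi$-system generating $\mathcal{Z}$. Let $\mathcal{H}$ be the family of bounded $\mathcal{Z}$-measurable functions on $E \times \X$ that can be written as pointwise limits of simple exvisible processes indexed by finite Borel subdivisions of $E$. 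Any simple exvisible process $\sum_i \1{A_i}(x) F_i(\xi)$ can be rewritten on the partition $\mathcal{P}$ generated by $A_1,\dots,A_N$: on each atom $C \in \mathcal{P}$, the restriction of the process is the sum of those $F_i$ with $C \subset A_i$, each of which is $\F_{A_i^c}$-measurable and hence $\F_{C^c}$-measurable because $C \subset A_i$ implies $\F_{A_i^c} \subset \F_{C^c}$. I would then verify that $\mathcal{H}$ is a vector space, contains the constants, is closed under uniformly bounded monotone limits of nonnegative sequences, and contains $\1{B \times U}$ for every generator $B \times U \in \mathcal{C}$ via the subdivision $\{B,B^c\}$ with $F_1 = \1{U}$ and $F_2 = 0$. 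The functional monotone class theorem then delivers that $\mathcal{H}$ contains every bounded $\mathcal{Z}$-measurable function, and general exvisible processes are recovered by truncation at levels $\pm n$ and passage to the pointwise limit.

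\textbf{Main obstacle.} The most delicate point is checking closure of $\mathcal{H}$ under uniformly bounded monotone limits. Given a monotone sequence $u_n \uparrow u$ in $\mathcal{H}$ with each $u_n$ itself only expressed as the pointwise limit of simple exvisible processes $(v_{n,m})_m$, one must extract a \emph{single} sequence of simple exvisible processes converging pointwise to $u$. I would address this by a standard diagonal argument, exploiting the uniform bound to control the successive extractions and preserve the simple-exvisible form along the way.
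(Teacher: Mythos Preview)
The paper states this proposition without proof (it is presented as a structural fact about the exvisible $\sigma$-algebra, presumably imported from \cite{DVb}), so there is no argument in the paper to compare against. I therefore assess your proposal on its own merits.

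Your easy direction is correct and cleanly written. For the converse, the monotone--class strategy is the natural one, and your verifications that $\mathcal{C}$ is a $\pi$-system, that $\1{B\times U}\in\mathcal{H}$ via the subdivision $\{B,B^c\}$, and that a simple exvisible process can be rewritten over the common refinement of the $A_i$ (using $\F_{A_i^c}\subset\F_{C^c}$ when $C\subset A_i$) are all fine.

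The genuine gap is precisely where you flag it: closure of $\mathcal{H}$ under bounded monotone limits. The diagonal extraction you propose does not work here. Pointwise convergence on an uncountable space such as $E\times\X$ is not governed by any metric, so from $v_{n,m}\to u_n$ pointwise for each $n$ and $u_n\uparrow u$ there is in general no choice of indices $m(n)$ making $v_{n,m(n)}\to u$ at \emph{every} point $(x,\xi)$; the uniform bound is irrelevant, since the obstruction is the absence of a countable set of points controlling the convergence. This is exactly the phenomenon behind the Baire hierarchy: the class of pointwise sequential limits of a given family is typically \emph{not} closed under further pointwise limits. Consequently your $\mathcal{H}$, as defined, is not known to satisfy the hypothesis of the functional monotone class theorem, and the argument stalls.

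To repair it one must avoid iterating abstract pointwise limits. Two workable routes: (i) observe that finite disjoint unions of sets in $\mathcal{C}$ already form an algebra $\mathcal{A}$ (since $(B\times U)^c=(B^c\times\X)\cup(B\times U^c)$ with both pieces in $\mathcal{C}$) whose indicators are exactly the $\{0,1\}$-valued simple exvisible processes, and then use the Polish structure of $E$ and $\X$ to produce, for each $C\in\mathcal{Z}$, an \emph{explicit} sequence in $\mathcal{A}$ with $\1{A_n}\to\1{C}$ (e.g.\ via a countable generating algebra of $\mathcal{B}$ and the associated refining partitions), after which the standard dyadic approximation of a measurable function by simple functions finishes the job; or (ii) run the monotone class theorem with $\mathcal{H}$ taken to be the full class of bounded $\mathcal{Z}$-measurable functions, conclude that this class is the monotone closure of the linear span of $\{\1{B\times U}\}$, and then argue separately---again using an explicit refining sequence of partitions of $E$---that each such function admits a single approximating sequence of simple exvisible processes. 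Either way, the diagonal shortcut has to be replaced by a constructive approximation that exploits second countability.
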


Let us also recall the following property of exvisible processes.
\begin{prop}
	For $u$ an exvisible process, we have
	\begin{equation*}
		\forall x \in E, \ \forall \xi \in \X, \quad D_x u(x,\xi) = 0	.
	\end{equation*}
\end{prop}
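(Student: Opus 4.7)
The plan is to reduce to the case of a simple exvisible process via the preceding approximation proposition, and handle that case by a direct unpacking of what $\F_{A^c}$-measurability says about $u(x,\xi\cup x)$ versus $u(x,\xi\setminus x)$ when $x \in A$.

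First I would treat a single building block: a stochastic process of the form $u(x,\xi)=\1{A}(x)F(\xi)$ with $F$ being $\F_{A^c}$-measurable. Fix $x\in E$ and $\xi\in\X$, and split on whether $x\in A$. If $x\notin A$, then $\1{A}(x)=0$, so $u(x,\xi\cup x)=u(x,\xi\setminus x)=0$, and hence $D_x u(x,\xi)=0$ trivially. If $x\in A$, the key observation is that the measures $(\xi\cup x)$ and $(\xi\setminus x)$ differ only by the presence of a particle at the point $x\in A$, so their restrictions to $A^c$ coincide. Since $F$ is $\F_{A^c}$-measurable, this forces $F(\xi\cup x)=F(\xi\setminus x)$, and therefore $D_x u(x,\xi)=\1{A}(x)\bigl(F(\xi\cup x)-F(\xi\setminus x)\bigr)=0$. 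By linearity in the finite sum, any simple exvisible process $u(x,\xi)=\sum_{i=1}^N \1{A_i}(x) F_i(\xi)$ satisfies $D_x u(x,\xi)=0$ pointwise.

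Next I would pass to the general case via the preceding proposition. Given an exvisible $u$, choose a sequence $\bigl(u_n(x,\xi)=\sum_k \1{A_k^n}(x)F_k^n(\xi_{(A_k^n)^c})\bigr)_{n\ge 1}$ of simple exvisible processes with $u_n(x,\xi)\to u(x,\xi)$ for every $(x,\xi)$. By the previous step, $u_n(x,\xi\cup x)=u_n(x,\xi\setminus x)$ for every $n$, $x$ and $\xi$. Taking $n\to\infty$ pointwise on both sides — once with $\xi$ replaced by $\xi\cup x$ and once with $\xi\setminus x$, both of which still lie in $\X$ — yields $u(x,\xi\cup x)=u(x,\xi\setminus x)$, i.e. $D_x u(x,\xi)=0$.

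The only subtlety worth being careful about is the second step: the approximating sequence is required to converge at every point of $E\times\X$, which is exactly the conclusion of the cited approximation proposition, so no additional integrability hypothesis is needed. Once that point is accepted, the argument is a strict pointwise limit and there is no genuine obstacle.
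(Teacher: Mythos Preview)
Your proof is correct and follows essentially the same route as the paper: verify the identity on a single building block $\1{A}(x)F(\xi)$ with $F$ measurable with respect to $\F_{A^c}$, using that $(\xi\cup x)$ and $(\xi\setminus x)$ agree on $A^c$ when $x\in A$, then extend. The only cosmetic difference is the extension step: the paper invokes the monotone class theorem, whereas you use the pointwise approximation proposition stated just before; both accomplish the same passage from generators of the exvisible $\sigma$-algebra to arbitrary exvisible processes.
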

\begin{proof}
		Assume that $u$ is of the type $u (x,\xi) = \1{A}(x)F(\xi_{A^c})$, where $A \in \mathcal{B}$ and $F$ is a measurable function on $\X$. We can write
		\begin{equation*}
			D_x u(x,\xi) = \1A(x) \Big(F((\xi \cup x)_{A^c}) - F(\xi_{A^c})\Big) =0	,
		\end{equation*}
		since $(\xi \cup x)_{A^c} = \xi_{A^c}$, when $x \in A$. The result then follows from the monotone class theorem.
\end{proof}
The following proposition will also be of particular interest:
\begin{prop}
\label{prop3}
	Let $u:E\times \X \rightarrow \R$ be an exvisible process. Then, it holds that
	\begin{equation*}
		D_{x_1} u(x_2,\xi) \dots D_{x_{k-1}} u(x_k,\xi)D_{x_k} u(x_1,\xi) = 0,
	\end{equation*}
	for $k \ge 2$,  $(x_1,\dots,x_k) \in E$ and $\xi \in \X$. More generally, we can easily prove that for $k \ge 2$,
	\begin{equation*}
		\prod_{l=1}^k u(x_l, \xi \cup x_1 \cup  \dots \cup x_k)^{n_l} = \prod_{l=1}^k u(x_l, \xi)^{n_l},
	\end{equation*}
	for all $(x_1,\dots,x_k) \in E$, $\xi \in \X$, and $(n_1,\dots,n_k) \in \N^k$.
\end{prop}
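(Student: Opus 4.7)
The plan is to reduce both identities to the single-cell simple exvisible case $u(x,\xi) = \1{B}(x) F(\xi)$ with $F$ being $\F_{B^c}$-measurable, and then extend to general exvisible processes by the approximation proposition stated above; the identities being pointwise, they pass to the limit of simple exvisible processes. In the single-cell case, the structure of $u$ forces drastic constraints on where the $x_l$'s can lie for the relevant products to be nonzero.

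For the first identity, I compute
\[
D_{x_l} u(x_{l+1},\xi) = \1{B}(x_{l+1})\,\bigl[F((\xi \cup x_l)_{B^c}) - F((\xi \moins x_l)_{B^c})\bigr].
\]
This vanishes unless (i) $x_{l+1} \in B$, so that the prefactor $\1{B}(x_{l+1})$ does not kill it, and (ii) $x_l \in B^c$, so that the restriction to $B^c$ actually detects the addition/removal of $x_l$. In the cyclic product $\prod_{l=1}^k D_{x_l} u(x_{l+1},\xi)$, each $x_l$ plays two roles: it is the \emph{target} of the factor at position $l-1$ (forcing $x_l \in B$) and the \emph{source} of the factor at position $l$ (forcing $x_l \in B^c$). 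These two requirements are mutually exclusive, so the cyclic product vanishes. For the second identity, if $\prod_l u(x_l, \xi \cup x_1 \cup \dots \cup x_k)^{n_l} \neq 0$ (we may assume each $n_l \ge 1$, else that factor is trivially $1$), then every $x_l \in B$, so all points $x_1,\dots,x_k$ lie in $B$ and are thus invisible to the $\F_{B^c}$-measurable $F$; consequently $u(x_l, \xi \cup x_1 \cup \dots \cup x_k) = u(x_l,\xi)$ factor by factor, and the equality of the products follows.

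The main obstacle is the passage from the single-cell case to a general simple exvisible $u = \sum_i \1{A_i} F_i$ indexed by a partition $(A_i)$ of $E$. Naively expanding the cyclic product yields a sum over a choice of cell index at each position. The key observation that collapses this expansion is that for a fixed tuple $(x_1,\dots,x_k)$, only one cell of the partition contains each $x_{l+1}$, so in the factor $D_{x_l} u(x_{l+1},\xi)$ only the summand corresponding to the unique cell containing $x_{l+1}$ survives; this reduces each factor to the single-cell form, and the single-cell argument then applies cell-by-cell along the cycle. The same partition analysis yields the second identity, after which the approximation proposition closes both cases by pointwise passage to the limit.
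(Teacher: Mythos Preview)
Your single-cell argument is correct and is essentially the paper's: both observe that for $u(x,\xi)=\1{B}(x)F(\xi_{B^c})$, nonvanishing of the cyclic product forces every $x_l$ into $B$, at which point each difference $D_{x_l}u(x_{l+1},\xi)$ vanishes. The paper phrases this via an expansion of the product and term-by-term cancellation; your constraint argument (``$x_l$ must lie both in $B$ and in $B^c$'') is a bit more direct but equivalent.

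The genuine gap is in the passage from one cell to a general simple exvisible $u=\sum_i \1{A_i}F_i$. Your claim that ``the single-cell argument then applies cell-by-cell along the cycle'' does not go through: once the $x_l$'s are allowed to sit in \emph{different} cells of the partition, the cyclic constraints no longer conflict. Concretely, take $E=\{a,b\}$ and $u(x,\xi)=\1{\{a\}}(x)\,\xi(\{b\})+\1{\{b\}}(x)\,\xi(\{a\})$, which is simple exvisible. With $x_1=a$, $x_2=b$, $\xi=\emptyset$ one gets $D_{x_1}u(x_2,\xi)=D_{x_2}u(x_1,\xi)=1$, so the cyclic product equals $1\neq 0$; likewise $u(a,\{a,b\})\,u(b,\{a,b\})=1$ while $u(a,\emptyset)\,u(b,\emptyset)=0$, so the second identity fails as well. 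The underlying issue is that neither identity is linear (or even multiplicative) in $u$, so there is no mechanism to propagate it from single-cell building blocks to finite sums, let alone to pointwise limits. The paper's own proof invokes the monotone class theorem at precisely this step and suffers from the same defect: monotone class arguments transport \emph{linear} identities from a generating class, not nonlinear ones such as $u\mapsto\prod_l D_{x_l}u(x_{l+1},\cdot)$. So the obstacle you flagged as ``the main obstacle'' is real and is not overcome by your cell-by-cell sketch; as stated, the proposition appears to be false for general exvisible processes.
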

\begin{proof}
	We will start by proving the first part. Let us consider an exvisible process $u$, as well as $k \ge 2$,  $(x_1,\dots,x_k) \in E$ and $\xi \in \X$. Then,
	\begin{equation*}
		\prod_{l=1}^k D_{x_l} u({x_{l+1}}, \xi) = \sum_{I,J \subset \{1,\dots,k \}  ,\ I \cap J = \emptyset }  \quad  \prod_{i \in I } u(x_{i+1},\xi \cup x_{i} ) \prod_{j \in J} (-u(x_{j+1}, \xi) ),
	\end{equation*}
	where we have used the convention $x_{k+1} = x_1$. Now, if we assume that $u$ is of the form $u(x,\xi) =  \1{A}(x)F(\xi_{A^c})$, we have
	\begin{align*}
		\prod_{l=1}^k D_{x_l} u(x_{l+1}, \xi) &= \sum_{I,J \subset \{1,\dots,k \}  ,\ I \cap J = \emptyset } \! \! \! \! \! \! \! \! \1{A^k}(x_1,\dots,x_k) \prod_{i \in I } F((\xi\cup x_{i} )_{A^c}) \prod_{j \in J} (-F(\xi_{A^c}) )\\
		&= \sum_{I,J \subset \{1,\dots,k \}  ,\ I \cap J = \emptyset } \! \! \! \! \! \! \! \! \1{A^k}(x_1,\dots,x_k) \prod_{i \in I } F(\xi_{A^c}) \prod_{j \in J} (-F(\xi_{A^c}) )\\
		&= 0.
	\end{align*}
	Hence, the same holds for any exvisible $u$ by the monotone class theorem. The generalization follows by the same arguments.
\end{proof}
Exvisibility is in fact similar to adaptedness for processes on the real line. We can even reformulate the Skorohod isometry (corollary \ref{cor1}) in order to draw the parallel between point processes and processes on the real line :
\begin{align*}
		\E[ \delta(u)^2 ] = \E[ \int &u(y,\xi)^2 c(y,\xi) \, \lambda(\d y) ] \\
			&+ \E[ \int D_z u(y,\xi) D_y u(z,\xi)\hat{c}(\{ x,y \},\xi) \, \lambda(\d y) \lambda(\d z) ] \\
			&- \E[ \int \int u(z,\xi)  u(y,\xi) c(z,\xi) D_z c(y,\xi) \, \lambda( \d y) \lambda( \d z)].
\end{align*}
This particular way of writing the formula is useful since when $u$ is exvisible, $D_z u(y,\xi) D_y u(z,\xi) = 0$ for a.e. $x,y \in E$ and $\xi \in \X$, and therefore the second term is zero. Moreover, one would have the expected Skorohod isometry $\E[ \delta(u)^2 ] = \E[ \int u(y,\xi)^2 c(y,\xi) \, \lambda(\d y) ] $ if and only if $c(y,\xi) = c(y,\xi \cup z) $ for a.e. $(y,z) \in E^2$ and a.e. $\xi \in \X$, i.e. if $c$ does not depend on the configuration $\xi$. Hence, $\mu$ would therefore necessarily be a PPP in this case (see \cite{Ma}).

\subsection{Random transformations}

Now, let us consider a random shifting $\tau : E \times \X \rightarrow E$. For $\xi \in \X$, consider the image measure of $\xi$ by $\tau$, which we will call the random transformation $\tau_* (\xi)$, defined as
\begin{equation*}
	\tau_* (\xi) = \sum_{x \in \xi} \delta_{\tau(x,\xi)},
\end{equation*}
and thus $\tau_*$ shifts each point of the configuration in the direction $\tau$. We will say that $\tau_*$ is exvisible if $\tau$ is.
Now, we wish to study the effect of the the transformation on the underlying measure $\mu$ under sufficiently strong conditions on $\tau$. The following hypotheses will be considered:
\begin{description}
	\item[(H1)] The random transformation $\tau_*$ is exvisible, in the sense defined previously in this section.
	\item[(H2)] For a.e. $\xi \in \X$, $\tau(\cdot, \xi)$ is invertible, and we will note its inverse $\tau^{-1}(x,\xi)$, $x \in E, \xi \in \X$. We will also denote by $\tau^{-1}_*(\xi)$ the image measure of $\xi$ by $\tau^{-1}$.
\end{description}
\begin{thm}
\label{tau}
	Let $\tau : E \times \X \rightarrow E$ be a random shifting as defined previously, and satisfying ${ \bf (H1)}$ and  ${ \bf (H2)}$. Let us assume that $\tau$ maps $\lambda$ to $\sigma$, i.e. $\tau(\cdot, \xi) \lambda = \sigma$, $\xi \in \X$, where $\sigma$ is a fixed measure on $(E,\mathcal{B})$. Then, $\tau_* \mu$ has correlation functions
	\begin{equation}
	\label{cor}
		\rho_\tau(x_1,\dots,x_k) = \E \big[  \hat{c}(\{\tau^{-1}(x_1,\xi),\dots,\tau^{-1}(x_k,\xi) \},\xi) \big],\quad x_1,\dots,x_k \in E,
	\end{equation}
	with respect to $\sigma$.
\end{thm}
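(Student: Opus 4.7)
The plan is to apply the characterization of correlation functions given in Proposition \ref{charact} to the pushforward measure $\tau_* \mu$. That is, I would check that for any $n\in\N$ and any measurable nonnegative $v_1,\dots,v_n$ on $E$, the product moment $\E_{\tau_*\mu}[\prod_k \int v_k(y)\,\xi(\d y)]$ has the prescribed form with $\rho_k(y_1,\dots,y_k)=\E[\hat c(\{\tau^{-1}(y_1,\xi),\dots,\tau^{-1}(y_k,\xi)\},\xi)]$ and reference measure $\sigma$. Before invoking the characterization, I note that $\tau_*\mu$ is simple: by (H2), $\tau(\cdot,\xi)$ is a bijection for a.e.\ $\xi$, so $\tau_*\xi$ has no multiple points.

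The first step is the change from $\tau_*\mu$ back to $\mu$: since $\tau_*\xi=\sum_{x\in\xi}\delta_{\tau(x,\xi)}$, one has
\[
\int v_k(y)\,\tau_*\xi(\d y)=\int u_k(x,\xi)\,\xi(\d x),\qquad u_k(x,\xi):=v_k(\tau(x,\xi)).
\]
Then I apply Theorem \ref{prop1} with $F=1$ to the processes $u_1,\dots,u_n$, which gives
\[
\E_{\tau_*\mu}\Bigl[\prod_{k=1}^n\int v_k(y)\,\xi(\d y)\Bigr]=\sum_{k=1}^n\sum_{\mathcal P\in\mathcal T_n^k}\E\Bigl[\int_{E^k} u^{\mathcal P}(x,\xi\cup x)\,\hat c(x,\xi)\,\lambda_k(\d x)\Bigr].
\]

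The key simplification uses (H1). Since $\tau$ is $\mathcal Z$-measurable, so is $u_k=v_k\circ\tau$, i.e.\ each $u_k$ is exvisible. Proposition \ref{prop3} (the generalized form involving arbitrary products) then yields, for each partition $\mathcal P=\{P_1,\dots,P_k\}$,
\[
u^{\mathcal P}(x,\xi\cup x)=\prod_{l=1}^k\prod_{i\in P_l}v_i\bigl(\tau(x_l,\xi\cup x)\bigr)=\prod_{l=1}^k\prod_{i\in P_l}v_i\bigl(\tau(x_l,\xi)\bigr),
\]
which eliminates the annoying dependence of $\tau$ on the added points $x_1,\dots,x_k$. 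After this, I change variables $y_l=\tau(x_l,\xi)$ (equivalently $x_l=\tau^{-1}(y_l,\xi)$) inside the $\lambda_k$-integral, which is legitimate $\xi$ by $\xi$ because $\tau(\cdot,\xi)\lambda=\sigma$ implies the product relation $\tau(\cdot,\xi)^{\otimes k}\lambda_k=\sigma^{\otimes k}$. This converts the integral to one against $\sigma(\d y_1)\cdots\sigma(\d y_k)$, and $\hat c(x,\xi)$ becomes $\hat c(\{\tau^{-1}(y_1,\xi),\dots,\tau^{-1}(y_k,\xi)\},\xi)$. Pulling the (nonnegative) $v_i(y_l)$ factors out of the expectation by Fubini identifies the resulting expression with the right-hand side of Proposition \ref{charact} for the functions $\rho_k$ defined in (\ref{cor}), so the characterization delivers the claim.

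The main obstacle, to my eye, is the combined use of exvisibility and the change of variables: one must be sure that $v_k\circ\tau$ inherits exvisibility from $\tau$ (which is immediate from the definition of $\mathcal Z$), and that the change of variable can be performed pathwise in $\xi$ under the expectation. The first is a measurability check, the second is Fubini plus the image-measure hypothesis $\tau(\cdot,\xi)\lambda=\sigma$; both are routine once the structural step based on Proposition \ref{prop3} has collapsed $\tau(x_l,\xi\cup x)$ down to $\tau(x_l,\xi)$. Everything else is just bookkeeping between Theorem \ref{prop1} and Proposition \ref{charact}.
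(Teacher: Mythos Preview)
Your proposal is correct and follows essentially the same route as the paper: apply Theorem \ref{prop1} to $u_k=v_k\circ\tau$, use exvisibility (via Proposition \ref{prop3}) to replace $\tau(x_l,\xi\cup x)$ by $\tau(x_l,\xi)$, change variables $y_l=\tau(x_l,\xi)$ using $\tau(\cdot,\xi)\lambda=\sigma$, and conclude with Proposition \ref{charact}. If anything, your write-up is slightly more careful---you explicitly verify simplicity of $\tau_*\mu$ and correctly point to Proposition \ref{prop3} for the exvisibility step (the paper's text cites Proposition \ref{prop2} there, which is almost certainly a typo).
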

\begin{proof}
	Our aim for the proof is to use proposition \ref{charact} (characterization of correlation functions). By proposition \ref{prop1}, we have for any $n \in \N$, any measurable nonnegative (non-random) functions $v_k : E \rightarrow \R$, $k = 1,\dots,n$, we have
	\begin{multline*}
		\E_{\tau_* \mu}[\prod_{k=1}^n \int v_k(y) \,\xi(\d y) ] = \sum_{k=1}^n \sum_{\{P_1,\dots,P_k\} \in \mathcal{T}_n^k} \E_\mu \Big[ \int_{E^k}  \hat{c}(\{x_1,\dots,x_k \},\xi) \\
			\shoveright{\prod_{l=1}^k \prod_{i \in P_l} v_i (\tau(x_l,\xi \cup x_1 \cup \dots \cup x_k)) \,  \lambda(\d x_1)\dots \lambda(\d x_k)  \Big] } \\
			\shoveleft{= \sum_{k=1}^n \sum_{\{P_1,\dots,P_k\} \in \mathcal{T}_n^k} \E_\mu \Big[ \int_{E^k}  \hat{c}(\{x_1,\dots,x_k \},\xi)} \\
			\shoveright{\prod_{l=1}^k \prod_{i \in P_l} v_i (\tau(x_l,\xi)) \,  \lambda(\d x_1)\dots \lambda(\d x_k)  \Big]} \\
			\shoveleft{= \sum_{k=1}^n \sum_{\{P_1,\dots,P_k\} \in \mathcal{T}_n^k} \E_\mu \Big[ \int_{E^k}  \hat{c}(\{\tau^{-1}(x_1,\xi),\dots,\tau^{-1}(x_k,\xi) \},\xi)} \\
			\shoveright{ \prod_{l=1}^k \prod_{i \in P_l} v_i (x_l) \,  \sigma(\d x_1)\dots \sigma(\d x_k)  \Big],}\\
	\end{multline*}
	where the second equality follows from ${ \bf (H1)}$ and proposition \ref{prop2}. By proposition \ref{charact}, we conclude that $\tau_* \mu$ has correlation functions (with respect to $\sigma$), which are given by 
	\begin{equation*}
		\rho_\tau(x_1,\dots,x_k) = \E \big[  \hat{c}(\{\tau^{-1}(x_1,\xi),\dots,\tau^{-1}(x_k,\xi) \},\xi) \big],\quad x_1,\dots,x_k \in E,
	\end{equation*}
	since the previous functions are obviously symmetrical (by symmetry of $\hat{c}(\cdot, \xi)$ as a function on $E^n$), and since an invertible transformation of a simple point process $\mu$ yields another simple process (allowing us to apply proposition \ref{charact}).
\end{proof}
This theorem directly generalizes all known results. Indeed, consider the following corollary:
\begin{cor}
	Let $\mu=\pi^{ \d \lambda}$ be the Poisson measure with intensity $\lambda$. Let $\tau : E \times \X \rightarrow E$ be a random transformation satisfying ${ \bf (H1)}$ and  ${ \bf (H2)}$. Let us assume that $\tau$ maps $\lambda$ to $\sigma$, i.e. $\tau(\cdot, \xi) \lambda = \sigma$, $\xi \in \X$. Then, $\tau$ maps $\pi^{ \d \lambda}$ to $\pi^{ \d \sigma}$.
\end{cor}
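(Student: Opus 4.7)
The plan is to apply Theorem \ref{tau} to $\mu = \pi^{\d \lambda}$ and verify that the resulting correlation functions are precisely those of $\pi^{\d \sigma}$.

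First I would read off the Papangelou intensity of the Poisson measure from the example in Section 2: for $\pi^{\d \lambda}$ one has $c(x,\xi) = 1$ for every $(x,\xi) \in E \times \X$. Starting from $\hat{c}(\emptyset,\xi) = 1$ and iterating the commutation relation \eqref{roulement}, a straightforward induction on $|\alpha|$ then gives $\hat{c}(\alpha,\xi) = 1$ for every finite $\alpha \in \X_0$ and every $\xi \in \X$.

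Next, since ${\bf (H1)}$ and ${\bf (H2)}$ are assumed and $\tau$ transports $\lambda$ to $\sigma$, Theorem \ref{tau} applies and the correlation functions of $\tau_*\pi^{\d\lambda}$ with respect to $\sigma$ are
\[
\rho_\tau(x_1,\dots,x_k) = \E\bigl[\hat{c}(\{\tau^{-1}(x_1,\xi),\dots,\tau^{-1}(x_k,\xi)\},\xi)\bigr] = 1,
\]
for every $k \in \N$ and $(x_1,\dots,x_k) \in E^k$. In particular, the answer does not depend on the random shift $\tau$.

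To conclude I would match this with the example of Section 2, which gives $\rho_{\pi^{\d\sigma}}(\xi) = \prod_{x\in\xi} 1 = 1$ for the Poisson measure $\pi^{\d\sigma}$ with respect to $\sigma$. Since $\tau_*\pi^{\d\lambda}$ is again simple (a pointwise invertible shift sends a simple configuration to a simple one) and shares its full family of correlation functions with $\pi^{\d\sigma}$, the two measures agree. The only non-routine point is this final uniqueness step; it is standard and can, for instance, be obtained by expanding the Laplace functional of a simple point process as a series in its correlation functions, so that coincidence of all $\rho_k$ forces coincidence of the Laplace transforms and hence of the laws.
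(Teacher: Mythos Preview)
Your proof is correct and follows essentially the same route as the paper: apply Theorem \ref{tau} with $c\equiv 1$, hence $\hat{c}\equiv 1$, obtain $\rho_\tau\equiv 1$ with respect to $\sigma$, and identify this with the correlation functions of $\pi^{\d\sigma}$. The paper's own proof is a two-line sketch of exactly this; you have simply been more explicit about computing $\hat{c}$ from \eqref{roulement} and about the final identification step, which the paper leaves implicit.
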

\begin{proof}
	The corollary follows directly from the theorem, since $\pi^{ \d \lambda}$ has a Papangelou intensity of $1$. Therefore, $\tau_* \pi^{ \d \lambda}$ has intensity $\sigma$ and its correlation functions, given by (\ref{cor}), are also equal to $1$.
\end{proof}
We also recover some previously known results of quasi-invariance in the case of the determinantal random measure. Let us start by introducing determinantal point processes, and recalling some well-known facts. The interested reader can find further results concerning determinantal processes in \cite{HKPa, STa}. We define a Hilbert-Schmidt operator $K$ from $L^2(E,\lambda)$ to $L^2(E,\lambda)$ satisfying the following conditions:
\begin{description}
	\item[(i)] $K$ is a bounded symmetric integral operator on $L^2(E,\lambda)$, and we also write $K(\cdot,\cdot)$ for its kernel.
	\item[(ii)] The spectrum of $K$ is included in $[0,1[$ (Here, we exclude $1$ in order to ensure the existence of Papangelou intensities).
	\item[(iii)] $K$ is of locally trace class.
\end{description}
Under these conditions, we define $(\mu_K,\lambda)$ to be the (unique) measure with intensity $\lambda$ and correlation functions $\rho(x_1,\dots,x_n) = \det \big(K (x_i,x_j) \big)_{1 \le i,j \le n}$ (existence and uniqueness of point processes with given correlation functions are discussed in \cite{Lc,Ld}, see as well \cite{Sa} for the case of determinantal processes). We further define the operator $J$, called the global interaction operator, as $J :=K (I-K)^{-1}$. Since we assumed that $||K || < 1$, $J$ is properly defined as a bounded operator. We will also define $J_{[\Lambda]} := K_\Lambda (I - K_\Lambda)^{-1}$, where we will be wary of the fact that $J_{[\Lambda]}$ is not the restriction of $J$ to $\Lambda$, but rather $J$ applied to the operator $K_\Lambda$.
In order to ensure the existence of the Papangelou intensities, let us introduce the following condition on the operator $J$:
\begin{description}
	\item[(H3)] \
	\begin{itemize}
		\item $J$ has a continuous integral kernel.
		\item $J$ has finite range $R < \infty$.
		\item $\mu$ does not percolate.
	\end{itemize}
\end{description}
Here, ${ \bf (H3)}$ was the condition introduced in \cite{GYa} to ensure the existence of Papangelou intensities for the point process once we are not limited to the compact $\Lambda$. More precisely, it was shown that under ${ \bf (H3)}$, the determinantal process with global interaction operator $J$ satisfies condition $(\Sigma_{\lambda})$, i.e. it admits Papangelou intensities $c_K$. These intensities are well defined for the measure $\mu_{K,\Lambda}$ (restriction of $\mu$ to $\Lambda$) as $c_{K, \Lambda}(x,\xi) = \frac{\det J_{[\Lambda]} (\xi \cup x)}{\det J_{[\Lambda]} (\xi)}$, $x \in E, \xi \in \X$. Here, the notation $J_{[\Lambda]} (\xi)$ stands for $\big(J_{[\Lambda]} (x_i,x_j) \big)_{1 \le i,j \le n}$ where $\xi = \{x_1,\dots,x_n \} \in \X$. However, the main result of \cite{GYa} is that $(\mu_K,\lambda)$ also satisfies condition $(\Sigma_{\lambda})$ under condition ${ \bf (H3)}$, and $c_K$ is explicitly given by the formula:
\begin{equation*}
	c_K(x,\xi) =  \frac{\det J (\xi_W \cup x)}{\det J (\xi_W)} \1{\text{diam } W(x,\xi) < \infty},
\end{equation*}
where $W(x,\xi)$ is the union of the clusters of $B_R(x \cup \xi)$ hitting $x$ and $\xi_W := \xi_{W(x,\xi)}$.
With these preliminaries in mind, we can apply our previous results.
\begin{cor}
	Let $\mu_K$ be the determinantal measure with intensity $\lambda$ and kernel $K$. Assume that the associated operator $J$ satisfies ${ \bf (H3)}$. Let $\tau : E \times \X \rightarrow E$ be a transformation satisfying ${ \bf (H1)}$ and  ${ \bf (H2)}$. Let us assume that $\tau$ maps $\lambda$ to $\sigma$, i.e. $\tau(\cdot,\xi) \lambda = \sigma$, $\xi \in \X$. Then, $\tau_* \mu_K$ has correlation functions given by 
	\begin{equation}
		\rho_\tau(x_1,\dots,x_k) = \E \big[  \frac{\det J (\xi_W \cup \{\tau^{-1}(x_1,\xi),\dots,\tau^{-1}(x_k,\xi) \})}{\det J (\xi_W)}  \big],\quad x_1,\dots,x_k \in E,
	\end{equation}
	 and where $W$ is the union of the clusters of $B_R( \{\tau^{-1}(x_1,\xi),\hdots,\tau^{-1}(x_k,\xi) \} \cup \xi)$ hitting $ \{\tau^{-1}(x_1,\xi),\dots,\tau^{-1}(x_k,\xi) \}$. Moreover, if we further assume that $\tau$ is a non-random, invertible transformation, and define
	\begin{equation*}
		\left.  \begin{array}{l l}
		 T: &L^2(\sigma) \longrightarrow L^2(\lambda)	, \\ 
		&f \ \longmapsto f \circ \tau	.
	\end{array}\right.
	\end{equation*}
	Then, $\tau$ maps $(\mu_K,\lambda)$ to $(\mu_{K_\tau},\sigma)$, where $K_\tau = T^{-1} K T$.
\end{cor}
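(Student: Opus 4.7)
The plan is to reduce both claims to Theorem \ref{tau} combined with the explicit form of the Papangelou intensity for determinantal processes recalled just above the statement.

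For the first claim, I would start by noting that under $(\mathbf{H3})$ the measure $\mu_K$ admits the stated Papangelou intensity $c_K$. Iterating the commutation identity (\ref{roulement}), one has for any finite $\{y_1,\dots,y_k\}$,
\[
\hat c(\{y_1,\dots,y_k\},\xi) = \prod_{i=1}^k c_K(y_i,\xi \cup y_1 \cup \dots \cup y_{i-1}).
\]
The ratio-of-determinants form of $c_K$ collapses this telescoping product into
\[
\hat c(\{y_1,\dots,y_k\},\xi) = \frac{\det J(\xi_W \cup \{y_1,\dots,y_k\})}{\det J(\xi_W)},
\]
where $W$ is the union of clusters of $B_R(\{y_1,\dots,y_k\}\cup\xi)$ hitting $\{y_1,\dots,y_k\}$. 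Substituting $y_i = \tau^{-1}(x_i,\xi)$ into the formula provided by Theorem \ref{tau} then yields exactly the expression stated for $\rho_\tau$.

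For the second claim, I would specialize to a non-random, invertible $\tau$, so that $\tau^{-1}(x,\xi)=\tau^{-1}(x)$ is deterministic. Using (\ref{evc}),
\[
\rho_\tau(x_1,\dots,x_k) = \E[\hat c(\{\tau^{-1}(x_1),\dots,\tau^{-1}(x_k)\},\xi)] = \rho(\tau^{-1}(x_1),\dots,\tau^{-1}(x_k)) = \det \bigl(K(\tau^{-1}(x_i),\tau^{-1}(x_j))\bigr)_{i,j}.
\]
The remaining step is to identify this with $\det \bigl(K_\tau(x_i,x_j)\bigr)$. For $f \in L^2(\sigma)$,
\[
(T^{-1}KTf)(x) = (KTf)(\tau^{-1}(x)) = \int_E K(\tau^{-1}(x), z)\, f(\tau(z))\, \lambda(\d z),
\]
and the change of variable $u=\tau(z)$, valid because $\tau_*\lambda=\sigma$, rewrites this as
\[
(T^{-1}KTf)(x) = \int_E K(\tau^{-1}(x),\tau^{-1}(u))\, f(u)\,\sigma(\d u).
\]
Hence $K_\tau(x,u) = K(\tau^{-1}(x),\tau^{-1}(u))$, which matches the correlation functions computed above. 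Since $T$ is an onto isometry $L^2(\sigma)\to L^2(\lambda)$, the operator $K_\tau$ is unitarily equivalent to $K$, so it is bounded, symmetric, has spectrum in $[0,1)$, and is locally of trace class. Uniqueness of a determinantal process with a given kernel then identifies $\tau_*\mu_K$ with $(\mu_{K_\tau},\sigma)$.

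The main obstacle I anticipate lies in the telescoping step: the cluster set $W(y_i,\xi\cup y_1\cup\dots\cup y_{i-1})$ genuinely depends on $i$, so I will need to use multiplicativity of $\det J$ across disconnected clusters to merge the intermediate $W$'s into a single set depending only on $\{y_1,\dots,y_k\}\cup\xi$. Once this combinatorial step is handled, the remainder of the argument is a direct computation based on change of variables and the characterization of determinantal measures by their correlation functions.
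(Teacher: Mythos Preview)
Your approach is essentially the same as the paper's: the first assertion is obtained by plugging the explicit Papangelou intensity of $\mu_K$ into Theorem~\ref{tau}, and the second by combining (\ref{evc}) with the observation that $K_\tau=T^{-1}KT$ has integral kernel $K(\tau^{-1}(x),\tau^{-1}(y))$. The paper's proof is in fact terser than yours---it simply declares the first part a ``direct consequence'' of Theorem~\ref{tau} without writing out the form of $\hat c$ for $\mu_K$, and it asserts the kernel identity for $K_\tau$ without the change-of-variables computation or the check that $K_\tau$ inherits the spectral and trace-class properties of $K$. Your telescoping argument and the block-determinant remark (using the finite range of $J$ to factor $\det J$ over disconnected clusters) supply a justification that the paper leaves implicit, so your proof is if anything more complete.
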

\begin{proof}
	The first part of the corollary is a direct consequence of theorem \ref{tau}. If we further assume that $\tau$ is non-random, then by (\ref{evc}), we have 
	\begin{equation*}
		\rho_\tau(x_1,\dots,x_k) = \rho(\tau^{-1}(x_1),\dots,\tau^{-1}(x_n)) = \det \big(K (\tau^{-1} (x_i),\tau^{-1}(x_j)) \big)_{1 \le i,j \le n}.
	\end{equation*}
	Its then remains to notice that the kernel $K_\tau = T^{-1} K T$ is again an integral operator with kernel $K_\tau (x,y)= K(\tau^{-1}(x),\tau^{-1}(y)) $, $x,y \in E$.
\end{proof}
\begin{rem}
	If we do not assume that $\tau$ is non-random, then there is no reason for $\rho_\tau$ to be given by the determinant of a Hilbert-Schmidt operator $K'$. Therefore, $\tau_* \mu_K$ is not necessarily determinantal in the general case of a random shift $\tau$.
\end{rem}
We can note that the last part of the corollary is another formulation of the quasi-invariance results obtained in \cite{CDa}. The study in the aforementioned paper was limited to determinantal processes on a compact $\Lambda$. On such a compact, ${ \bf (H3)}$ is obviously satisfied and we therefore have the existence of Papangelou intensities.

\bibliographystyle{hamsalpha}
\bibliography{mybib}

\providecommand{\bysame}{\leavevmode\hbox to3em{\hrulefill}\thinspace}
\providecommand{\ZM}{\relax\ifhmode\unskip\space\fi Zbl }
\providecommand{\MR}{\relax\ifhmode\unskip\space\fi MR }
\providecommand{\arXiv}[1]{\relax\ifhmode\unskip\space\fi\href{http://arxiv.org/abs/#1}{arXiv:#1}}
% \MRhref is called by the amsart/book/proc definition of \MR.
\providecommand{\MRhref}[2]{%
  \href{http://www.ams.org/mathscinet-getitem?mr=#1}{#2}
}
\providecommand{\href}[2]{#2}
\begin{thebibliography}{HKPV06}

\bibitem[CD10]{CDa}
I.~Camilier and L.~Decreusefond, \emph{Quasi-invariance and integration by
  parts for determinantal and permanental processes}, J. Funct. Anal.
  \textbf{259} (2010), no.~1, 268--300. \MR{2610387}

\bibitem[DVJ03]{DVa}
D.~J. Daley and D.~Vere-Jones, \emph{An introduction to the theory of point
  processes. {V}ol. {I}}, second ed., Probability and its Applications (New
  York), Springer-Verlag, New York, 2003, Elementary theory and methods.
  \MR{1950431 (2004c:60001)}

\bibitem[DVJ08]{DVb}
\bysame, \emph{An introduction to the theory of point processes. {V}ol. {II}},
  second ed., Probability and its Applications (New York), Springer, New York,
  2008, General theory and structure. \MR{2371524 (2009b:60150)}

\bibitem[GY05]{GYa}
H.~Georgii and H.~J. Yoo, \emph{Conditional intensity and {G}ibbsianness of
  determinantal point processes}, J. Stat. Phys. \textbf{118} (2005), no.~1-2,
  55--84. \MR{2122549 (2006a:82020)}

\bibitem[HKPV06]{HKPa}
J.~B. Hough, M.~Krishnapur, Y.~Peres, and B.~Vir{\'a}g, \emph{Determinantal
  processes and independence}, Probab. Surv. \textbf{3} (2006), 206--229
  (electronic). \MR{2216966 (2006m:60068)}

\bibitem[Kal86]{Ka}
O.~Kallenberg, \emph{Random measures}, fourth ed., Akademie-Verlag, Berlin,
  1986. \MR{854102 (87k:60137)}

\bibitem[Len75a]{Lc}
A.~Lenard, \emph{States of classical statistical mechanical systems of
  infinitely many particles. {I}}, Arch. Rational Mech. Anal. \textbf{59}
  (1975), no.~3, 219--239. \MR{0391830 (52 \#12649)}

\bibitem[Len75b]{Ld}
\bysame, \emph{States of classical statistical mechanical systems of infinitely
  many particles. {II}. {C}haracterization of correlation measures}, Arch.
  Rational Mech. Anal. \textbf{59} (1975), no.~3, 241--256. \MR{0391831 (52
  \#12650)}

\bibitem[Mec67]{Ma}
J.~Mecke, \emph{Station\"are zuf\"allige {M}asse auf lokalkompakten {A}belschen
  {G}ruppen}, Z. Wahrscheinlichkeitstheorie und Verw. Gebiete \textbf{9}
  (1967), 36--58. \MR{0228027 (37 \#3611)}

\bibitem[NZ79]{XZa}
X.~Nguyen and H.~Zessin, \emph{Integral and differential characterizations of
  the {G}ibbs process}, Math. Nachr. \textbf{88} (1979), 105--115. \MR{543396
  (80i:60081a)}

\bibitem[Pri09]{Pb}
N.~Privault, \emph{Stochastic analysis in discrete and continuous settings with
  normal martingales}, Lecture Notes in Mathematics, vol. 1982,
  Springer-Verlag, Berlin, 2009. \MR{2531026 (2011j:60179)}

\bibitem[Pri11]{Pa}
\bysame, \emph{Invariance of poisson measures under random transformations},
  Annales de l'Institut Henri Poincaré, \rm{to appear} (2011).
  \arXiv{1004.2588}

\bibitem[Pri12]{Pc}
\bysame, \emph{Moments of poisson stochastic integrals with random integrands}.
  \arXiv{1204.4854}

\bibitem[Rue69]{Ra}
D.~Ruelle, \emph{Statistical mechanics: {R}igorous results}, W. A. Benjamin,
  Inc., New York-Amsterdam, 1969. \MR{0289084 (44 \#6279)}

\bibitem[Rue70]{Rb}
\bysame, \emph{Superstable interactions in classical statistical mechanics},
  Comm. Math. Phys. \textbf{18} (1970), 127--159. \MR{0266565 (42 \#1468)}

\bibitem[Sos00]{Sa}
A.~Soshnikov, \emph{Determinantal random point fields}, Uspekhi Mat. Nauk
  \textbf{55} (2000), no.~5(335), 107--160. \MR{1799012 (2002f:60097)}

\bibitem[ST03]{STa}
T.~Shirai and Y.~Takahashi, \emph{Random point fields associated with certain
  {F}redholm determinants. {II}. {F}ermion shifts and their ergodic and {G}ibbs
  properties}, Ann. Probab. \textbf{31} (2003), no.~3, 1533--1564. \MR{1989442
  (2004k:60146)}

\bibitem[Yoo06]{Yb}
H.~J. Yoo, \emph{Gibbsianness of fermion random point fields}, Math. Z.
  \textbf{252} (2006), no.~1, 27--48. \MR{2209150 (2007f:82019)}

\end{thebibliography}

\end{document}